\theoremstyle{plain}
\newtheorem{thm}{Theorem}
\newtheorem{conj}{Conjecture}
\theoremstyle{remark}
\newtheorem{rem}{Remark}
\DeclareMathOperator{\td}{d}
\DeclareMathOperator{\bell}{B}
\DeclareMathOperator{\te}{e}
\begin{document}

\title[Closed-form formulas, determinantal expressions, ... ] {Closed-form formulas, determinantal expressions, recursive relations, power series, and special values of several functions used in Clark--Ismail's two conjectures}
\author[Y.-F. Li, D. Lim, F. Qi]{Yan-Fang Li$^{1}$, Dongkyu Lim$^{2,*}$, Feng Qi$^{3,4,1,*}$}
\thanks{$^1$School of Mathematics and Informatics, Henan Polytechnic University, Jiaozuo 454010, Henan, China\\
\indent$^2$Department of Mathematics Education, Andong National University, Andong 36729, Republic of Korea\\
\indent$^3$School of Mathematics and Physics, Hulunbuir University, Inner Mongolia 021008, China\\
\indent$^4$Independent researcher, Dallas, TX 75252-8024, USA\\
\indent$^*$Corresponding author
\\ \indent\,\,\,e-mail: dklim@anu.ac.kr (Lim), honest.john.china@gmail.com (Qi)
\\ \indent
  \em \,\,\, This paper will be formally published in Applied and Computational Mathematics \textbf{22} (2023), No.~4}

\begin{abstract}

In the paper, by virtue of the famous formula of Fa\`a di Bruno, with the aid of several identities of partial Bell polynomials, by means of a formula for derivatives of the ratio of two differentiable functions, and with availability of other techniques, the authors establish closed-form formulas in terms of the Bernoulli numbers and the second kind Stirling numbers, present determinantal expressions, derive recursive relations, obtain power series, and compute special values of the function $\frac{v^j}{1-\te^{-v}}$, its derivatives, and related ones used in Clark--Ismail's two conjectures. By these results, the authors also discover a formula for the determinant of a Hessenberg matrix and derive logarithmic convexity of a sequence related to the function and its derivatives.

\bigskip
\noindent Keywords: conjecture; Bell polynomial of the second kind; Fa\`a di Bruno formula; formula for derivatives of the ratio of two differentiable functions; closed-form formula; determinantal expression; recursive relation; special value; exponential function; positivity; Stirling number of the second kind; power series; Hessenberg determinant; Bernoulli number; logarithmic convexity.

\bigskip \noindent AMS Subject Classification: Primary 33B10; Secondary 15A15, 26A24, 26A48, 26A51, 33B15, 44A10, 41A58.

\end{abstract}

\maketitle

\section{Introduction}

The classical gamma function $\Gamma(v)$, firstly defined by Euler's integral, can also be defined by
\begin{equation*}
\Gamma(v)=\lim_{m\to\infty}\frac{m!m^v}{\prod_{j=0}^m(v+j)}, \quad v\in\mathbb{C}\setminus\{0,-1,-2,\dotsc\}.
\end{equation*}
See~\cite[p.~255, 6.1.2]{abram} and~\cite[Theorem~3.1]{Temme-96-book}.
Its logarithmic derivative
$\psi(v)=[\ln\Gamma(v)]'=\frac{\Gamma'(v)}{\Gamma(v)}$
is called the digamma function.
The gamma function $\Gamma(v)$ and polygamma functions $\psi^{(j)}(v)$ for $j\ge0$ have much extensive applications in many branches such as statistics, probability, number theory, theory of $0$-$1$ matrices, graph theory, combinatorics, physics, engineering, and other mathematical sciences.
\par
Let $I\subseteq\mathbb{R}$ stand for an interval and $\mathbb{N}$ denote the set of all natural numbers (not including zero). If a smooth function $f(v)\in C^\infty(I)$ satisfies $(-1)^j f^{(j)}(v)\ge0$ on $I$ for $j\in\{0\}\cup\mathbb{N}$, then we say that $f(v)$ is of complete monotonicity on $I$. See~\cite[Chapter~XIII]{mpf-1993}, \cite[Chapter~1]{Schilling-Song-Vondracek-2nd}, and~\cite[Chapter~IV]{widder}. A function $f(v)$ is of complete monotonicity on $I=(0,\infty)$ if and only if
\begin{equation}\label{Theorem12b-Laplace}
f(v)=\int_0^\infty \te^{-v s}\td\sigma(s), \quad s\in(0,\infty),
\end{equation}
where $\sigma(s)$ is non-decreasing and the integral~\eqref{Theorem12b-Laplace} converges for $s\in(0,\infty)$. See~\cite[p.~161, Theorem~12b]{widder}.
\par
In~\cite[Lemmas~1 and~3]{alzermonatsh}, Alzer proved that the second derivatives
\begin{equation*}
[v\psi(v)]''=2\psi'(v)+v\psi''(v)
\quad\text{and}\quad
\bigl[v^2\psi(v)\bigr]''=v^2\psi''(v)+4v\psi'(v)+2\psi(v)
\end{equation*}
are both completely monotonic on $(0,\infty)$.
In the paper~\cite{clark-ismail}, among other things, Clark and Ismail proved that
all the derivatives $[v^j\psi(v)]^{(j+1)}$ for $j\in\mathbb{N}$ are completely monotonic on $(0,\infty)$
and that, with the help of the Maple V Release $5$, the $j$th derivatives
\begin{equation*}
\Phi_j^{(j)}(v)=-\bigl[v^j\psi^{(j)}(v)\bigr]^{(j)}
\end{equation*}
for $2\le j\le16$ are completely monotonic on $(0,\infty)$.
In the paper~\cite{clark-ismail}, the proof of the complete monotonicity of $\Phi_j^{(j)}(v)$ relies on the positivity of
\begin{equation*}
f_j(v)=\frac{\td^j}{\td v^j}\biggl(\frac{v^j}{1-\te^{-v}}\biggr)
\quad\text{or}\quad
g_j(v)=\biggl(\frac{1-\te^{-v}}v\biggr)^{j+1}\te^{j v}f_j(v)
\end{equation*}
on $(0,\infty)$ for $2\le j\le16$.
For $j\in\mathbb{N}$, there exists the relation
\begin{equation*}
\Phi_j^{(j)}(v)=\int_0^\infty t^j f_j(t)\te^{-v t}\td t.
\end{equation*}
See~\cite[p.~108]{Alzer-Berg-Koumandos-conj} and~\cite{clark-ismail}.
Thereafter, Clark and Ismail posed in~\cite{clark-ismail} the following two conjectures.

\begin{conj}\label{conjclark1}
The functions $f_j(v)$, or equivalently $g_j(v)$, are positive on $(0,\infty)$ for
all $j>1$.
\end{conj}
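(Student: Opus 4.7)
The plan is to render the positivity of $f_j(v)$ tractable by first producing an explicit closed-form expression for $f_j(v)$ whose sign can be analyzed directly. I would begin from the classical generating function
\[
\frac{v}{1-\te^{-v}} = \sum_{n=0}^{\infty}(-1)^n B_n \frac{v^n}{n!},
\]
where $B_n$ denotes the Bernoulli numbers, multiply by $v^{j-1}$, and differentiate $j$ times termwise to obtain a Maclaurin expansion of $f_j(v)$ with coefficients built from Bernoulli numbers. Positivity near $v=0$ can be read off from the leading nonzero coefficient (which gives $f_j(0)=j!/2$), but positivity on the whole of $(0,\infty)$ does not follow from this local information.

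Second, I would apply the Fa\`a di Bruno formula to the composition hidden in $\frac{v^j}{1-\te^{-v}}$, writing it as $v^j \cdot h(\te^{-v})$ with $h(u) = \frac{1}{1-u}$, and then use the standard identities that express $\bell_{n,k}$ evaluated on the relevant exponential-type sequences in terms of the Stirling numbers of the second kind $S(n,k)$. Combined with the Leibniz rule applied to the product $v^j \cdot h(\te^{-v})$, this produces a finite closed-form expression for $f_j(v)$ on $(0,\infty)$ in terms of $B_n$ and $S(n,k)$, an expression one can hope to group into manifestly positive blocks.

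Third, I would invoke the general formula for derivatives of a ratio $p(v)/q(v)$ applied to $p(v) = v^j$ and $q(v) = 1-\te^{-v}$; this delivers a Hessenberg-type determinantal expression for $f_j(v)$, from which a recursive relation in $j$ can be extracted by cofactor expansion along the last row or column. The recursion would then serve as the backbone for an inductive positivity argument: assuming $f_1, \dotsc, f_{j-1} > 0$ on $(0,\infty)$, one would attempt to conclude $f_j > 0$, using the explicit sign pattern of the cofactors together with the complete monotonicity results of Alzer and of Clark--Ismail recalled in the introduction.

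The principal obstacle is sign cancellation. The Bernoulli numbers alternate in sign at consecutive even indices, and the combinatorial coefficients, although nonnegative, are weighted by mixed-sign Bernoulli numbers, so the closed-form sum is not termwise positive. Working instead with $g_j(v) = \bigl(\frac{1-\te^{-v}}{v}\bigr)^{j+1}\te^{j v} f_j(v)$ offers a more hopeful route, since the factor $\te^{j v}$ expands as a manifestly positive series and can be coupled with the rational part, transforming the question into one about a sum of exponentials times a rational function. Even then, separating positive from negative contributions appears to demand sharp inequalities among products of Bernoulli and Stirling numbers. I expect this last inequality step to be the genuinely hard part; short of a new structural insight, the uniform bound over all $j > 1$ will remain the essential obstruction that kept the conjecture open beyond the computer-assisted range $2 \le j \le 16$ treated by Clark and Ismail.
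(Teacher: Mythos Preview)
The statement you are attempting to prove is labeled a \emph{Conjecture} in the paper, not a theorem, and the paper offers no proof of it. More to the point, the paper explicitly records (citing Alzer, Berg, and Koumandos) that there exists an integer $j_0$ such that $\Phi_j^{(j)}(v)=\int_0^\infty t^j f_j(t)\te^{-vt}\td t$ fails to be completely monotonic for all $j\ge j_0$. Since positivity of $f_j$ on $(0,\infty)$ would force $\Phi_j^{(j)}$ to be completely monotonic via the Bernstein--Widder representation, this result \emph{disproves} Conjecture~\ref{conjclark1}: for sufficiently large $j$ the function $f_j$ must take negative values somewhere on $(0,\infty)$. Your proposed argument therefore cannot succeed, not because of a subtle gap in the technique, but because the target statement is false.

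To your credit, you already sensed the trouble: you write that sign cancellation among Bernoulli numbers is the ``principal obstacle'' and that the uniform bound ``will remain the essential obstruction.'' That intuition is exactly right, and it is not merely an obstruction to your method---it reflects the genuine failure of the conjecture. The machinery you outline (Maclaurin expansion via Bernoulli numbers, Fa\`a di Bruno with Stirling numbers, the determinantal/Hessenberg formula and the resulting recursion) is precisely what the paper develops in Theorems~\ref{exp-t-n-power-lem}--\ref{f(j)-Clark-recur-thm}, but the paper uses these tools to obtain closed forms, power series, and special values, not to attempt a positivity proof. If you wish to pursue something in this direction, the sharpest positive results available are that $f_j(v)>0$ on $(\ln 2,\infty)$ for all $j\ge0$ (Castillo, cited in the paper), and that $f_j>0$ on all of $(0,\infty)$ for $2\le j\le 16$ (Clark--Ismail, computer-assisted).
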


\begin{conj}\label{conjclark2}
Let
\begin{equation}\label{conjclark-eq}
g_j(v)=\sum_{k=0}^\infty\gamma(j,k)v^k.
\end{equation}
Then $\gamma(j,k)\ge0$ for all $j>1$ and $k\ge0$.
\end{conj}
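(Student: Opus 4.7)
The plan is to obtain $g_j(v)$ as an explicit power series by expanding its constituent factors with the tools signalled in the paper---the Fa\`a di Bruno formula, partial Bell polynomials $B_{n,k}$, the determinantal formula for the derivatives of a ratio, and the Bernoulli expansion of $v/(1-\te^{-v})$---and then to argue non-negativity of the resulting coefficients $\gamma(j,k)$.

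First, I would derive a power series for $f_j(v)$. Starting from $\frac{v}{1-\te^{-v}}=\sum_{n\ge 0}(-1)^n B_n\frac{v^n}{n!}$, multiplying by $v^{j-1}$ and differentiating $j$ times term by term gives
\begin{equation*}
f_j(v)=\sum_{n\ge 1}(-1)^n B_n\,\frac{(n+j-1)!}{(n-1)!\,n!}\,v^{n-1}.
\end{equation*}
As a cross-check and as preparation for later manipulations, I would also derive $f_j$ via Fa\`a di Bruno applied to $t\mapsto 1/t$ composed with $v\mapsto 1-\te^{-v}$, together with Leibniz's rule and the determinantal formula for $(p/q)^{(n)}$, obtaining equivalent closed-form and Hessenberg-determinant expressions in terms of Stirling numbers of the second kind.

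Next, I would expand the prefactor $P_j(v):=\bigl(\frac{1-\te^{-v}}{v}\bigr)^{j+1}\te^{jv}$. Because $\frac{1-\te^{-v}}{v}=\sum_{n\ge 0}\frac{(-v)^n}{(n+1)!}$, its $(j+1)$st power is computed through the partial Bell polynomials $B_{n,j+1}$ evaluated at the sequence $\bigl(\frac{(-1)^m m!}{m+2}\bigr)_{m\ge 0}$, and multiplication by $\te^{jv}$ is a Cauchy product. Forming the Cauchy product of $P_j$ with $f_j$ then yields $\gamma(j,k)$ as an explicit (triple) sum over Bernoulli numbers, Stirling numbers of the second kind, and binomial coefficients.

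The main obstacle is proving that this alternating-sign expression is non-negative for every $j>1$ and $k\ge 0$. The case $j=1$ collapses almost miraculously to $g_1(v)=(\te^v-1-v)/v^2=\sum_{k\ge 0}\frac{v^k}{(k+2)!}$, in which positivity is manifest, but one does not expect such algebraic simplification for general $j$. I would therefore pursue three complementary routes: (a) derive a linear recurrence in $k$ for $\gamma(j,k)$ from a differential identity obtained by differentiating $v^{j+1}g_j(v)=(1-\te^{-v})^{j+1}\te^{jv}f_j(v)$, and then induct on $k$ after verifying finitely many base cases; (b) seek an integral representation $g_j(v)=\int \te^{v u}\,\td\mu_j(u)$ with $\mu_j\ge 0$, exploiting the fact that $\bigl(\frac{1-\te^{-v}}{v}\bigr)^{j+1}\te^{jv}$ is, up to an affine change of variable, the Laplace transform of the non-negative B-spline density of $j+1$ uniform summands; (c) search for a combinatorial interpretation of $\gamma(j,k)$ as a positive enumeration, perhaps of labelled rooted structures weighted by a statistic. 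The central difficulty will be controlling the cancellation between the Bernoulli contributions and the Bell-polynomial contributions, and this is precisely the point at which a genuinely new idea would be required.
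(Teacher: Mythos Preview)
The statement you are attempting to prove is labelled in the paper as a \emph{conjecture}, not a theorem; the paper offers no proof of it. What the paper actually does is derive closed-form expressions for $\gamma(j,k)$ (Theorems~\ref{gamma(n-k)-thm} and~\ref{Maclaurin-f(n)(t)-Bern-thm}) and tabulate the first few values, explicitly leaving the sign question open with the remark ``Can one numerically find a negative value among the coefficients $\gamma(j,k)$ for $j>9$ or $k>7$?''

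More to the point, the conjecture is known to be \emph{false} for sufficiently large $j$, so your programme cannot succeed. The paper itself recalls the result of Alzer, Berg, and Koumandos (Theorem~1.1 of~\cite{Alzer-Berg-Koumandos-conj}): there exists $j_0$ such that for all $j\ge j_0$ the function $\Phi_j^{(j)}(v)=\int_0^\infty t^j f_j(t)\,\te^{-vt}\td t$ is not completely monotonic on $(0,\infty)$. If $f_j$ were non-negative on $(0,\infty)$ this Laplace integral would automatically be completely monotonic; hence $f_j$ changes sign for every $j\ge j_0$, which disproves Conjecture~\ref{conjclark1}, and since the paper notes that Conjecture~\ref{conjclark2} implies Conjecture~\ref{conjclark1}, Conjecture~\ref{conjclark2} is disproved as well. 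The ``cancellation'' you flag as the central difficulty is therefore not a technical nuisance to be overcome but a genuine obstruction: for large $j$ it actually produces negative coefficients, and none of your routes (a)--(c) can establish $\gamma(j,k)\ge0$ in the stated generality.
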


Conjecture~\ref{conjclark2} implies Conjecture~\ref{conjclark1}. In~\cite[p.~400]{Musallam-Bustoz-conj}, Conjecture~\ref{conjclark2} was restated as follows. The Maclaurin power series expansion of the function
\begin{equation}\label{frak-g(v)-dfn}
\frak{g}_j(v)=\bigl(\te^v-1\bigr)^{j+1}\biggl(\frac{v^j}{1-\te^{-v}}\biggr)^{(j)}, \quad j\ge0
\end{equation}
has non-negative coefficients. It is clear that $\frak{g}_j(v)=v^{j+1}\te^v g_j(v)$.
\par
Theorem~1.1 in~\cite{Alzer-Berg-Koumandos-conj} reads that there is an integer $j_0$ such that for all $j\ge j_0$ the derivatives $\Phi_j^{(j)}(v)$ are not completely monotonic on $(0,\infty)$. In the proof of~\cite[Theorem~1.1]{Alzer-Berg-Koumandos-conj}, the function
\begin{equation*}
H(v)=\sum_{k=1}^\infty\frac1k\sin\frac{v}k, \quad v\in\mathbb{C},
\end{equation*}
which was investigated by Hardy and Littlewood in 1936, plays a key role, because Conjecture~\ref{conjclark1} is equivalent to the nonnegativity of the function
$\frac12+\frac1{\pi}H\bigl(\frac{v}{2\pi}\bigr)$ for $v>0$.
To verify this equivalence, the authors proved in~\cite[pp.~112--113, Appendix]{Alzer-Berg-Koumandos-conj} that the $f_j(v)$ is nonnegative on $[2\ln2,\infty)$. In~\cite[p.~113]{Alzer-Berg-Koumandos-conj}, the authors wrote the following observation.
\begin{quote}
An examination of the function $f_j(v)$ shows that $f_j(v)$ for $j>1$ starts as an increasing function at $v=0$ with
\begin{equation}\label{f(n)=0-1-der-values}
f_j(0)=\frac{j!}{2},\quad f_j'(0)=\frac{(j+1)!}{12},
\end{equation}
and it oscillates crossing the line $y=j!$ a number of times. It approaches $j!$ from above or below depending on the parity of $j$ for $v\to\infty$. Computer experiments suggest that it crosses $j-1$ times. The oscillation close to $v=0$ becomes very wild as $j$ becomes very large.
\end{quote}
\par
In~\cite[Theorem~2.1]{Musallam-Bustoz-conj}, the positivity of $f_j(v)$ for $j\ge0$ on $(2\ln2,\infty)$ was simply recovered by expressing the function $f_j(v)$ in terms of the Laguerre polynomials.
\par
In~\cite{K.CASTILLO.Ramanujan-2023}, the interval $(2\ln2,\infty)$, where the function $f_j(v)$ for $j\ge0$ is positive, was extended to the longer interval $(\ln2,\infty)$.
\par
In~\cite{Exp-Diff-Ratio-Wei-Guo.tex}, among other things, the following conclusions were obtained.
\begin{enumerate}
\item
For $j\in\{0\}\cup\mathbb{N}$, the function $(-1)^j f_0^{(j)}(v)$ is of complete monotonicity on $(0,\infty)$.
More strongly, the function $f_0(v)$ is of logarithmically complete monotonicity on $(0,\infty)$. For the notion of logarithmically complete monotonicity, please look up in the paper~\cite{compmon2} or in the monograph~\cite{Schilling-Song-Vondracek-2nd}.
\item
For $j\in\{0\}\cup\mathbb{N}$, the functions
\begin{equation*}
1-f_0'(v)-f_0(v) \quad\text{and}\quad (-1)^j\bigl[f_0^{(j+2)}(v)+f_0^{(j+1)}(v)\bigr]
\end{equation*}
are of complete monotonicity on $(0,\infty)$. In particular, the sequence $(-1)^kf_0^{(j)}(v)$ is increasing with respect to $j$, that is, the inequality
\begin{equation*}
(-1)^kf_0^{(j)}(v)<(-1)^{j+1}f_0^{(j+1)}(v)
\end{equation*}
is valid for all $j\in\{0\}\cup\mathbb{N}$ and $v\in(0,\infty)$.
\item
For $j\in\{0\}\cup\mathbb{N}$, the ratio
\begin{equation*}
\mathcal{F}_j(v)=-\frac{f_0^{(j+1)}(v)}{f_0^{(j)}(v)}
\end{equation*}
is decreasing on $(0,\infty)$, with
\begin{equation*}
\lim_{v\to\infty}\mathcal{F}_0(v)=0 \quad\text{and}\quad \lim_{v\to\infty}\mathcal{F}_{j+1}(v)=1.
\end{equation*}
\end{enumerate}
On applications of derivatives of the function $f_0(v)$, please refer to~\cite{AIMS-Math-2019595.tex, RCSM-D-21-00302.tex}.
\par
The function $f_1(v)$ has been investigated in~\cite[Lemma~2.3]{TJI-5(1)(2021)-6.tex} for applying therein. The function $f_2(v)$ has been studied in~\cite[Lemma~2.3]{Alice-tri-half-conj.tex} and~\cite[Lemma~2.1]{TWMS-20657.tex} for applying therein.
\par
The simple function
\begin{equation}\label{F-1(v)-Eq}
F_1(v)=\frac{1-\te^{-v}}v=\int_{0}^{1}\te^{-v u}\td u,
\end{equation}
which is of complete monotonicity on $(-\infty,\infty)$, has been deeply investigated, extensively applied, and expositorily reviewed in~\cite{era-905.tex, Guo-Qi-Filomat-2011-May-12.tex, 1st-Sirling-Number-2012.tex, Qi-Springer-2012-Srivastava.tex, AMSPROC.TEX} and closely related references therein. It is not hard to see that
\begin{equation}\label{F(n)(t)-Int-Eq}
F_j(v)=\frac{1-\te^{-v}}{v^j}=\frac{1}{v^{j-1}}\int_{0}^{1}\te^{-v u}\td u.
\end{equation}
This implies that $F_j(v)$ for $j\ge2$ is of complete monotonicity on $(0,\infty)$, while $F_0(v)$ is a Bernstein function on $(0,\infty)$. For information on the Bernstein functions, please refer to the monograph~\cite{Schilling-Song-Vondracek-2nd} and the papers~\cite{22ICFIDCAA-Filomat.tex, Recipr-Sqrt-Geom-S.tex}.
\par
For $j\in\mathbb{N}$ and $v\in(-\infty,\infty)$, let
\begin{equation}\label{G(n)(t)-dfn}
G_j(v)=\frac{1}{F_j(v)}=
\begin{dcases}
\frac{v^j}{1-\te^{-v}}, & v\ne0;\\
1, & v=0, \quad j=1;\\
0, & v=0, \quad j>1.
\end{dcases}
\end{equation}
It is clear that $f_j(v)=G_j^{(j)}(v)$ for $j\in\mathbb{N}$. In~\cite[Theorem~1.2 and Remark~5.5]{Bessel-ineq-Dgree-CM.tex}, it was proved that, when $1\le j\le 5$, the inequalities
\begin{equation*}
G_1^{(j-1)}(v)\le \frac{I_j\bigl(2\sqrt{v}\,\bigr)}{v^{j/2}}
\quad\text{and}\quad
G_1^{(4)}(v)\le\frac{v+6}{720}
\end{equation*}
are valid on $(0,\infty)$, where
\begin{equation*}
I_\nu(v)= \sum_{j=0}^\infty\frac1{j!\Gamma(\nu+j+1)}\biggl(\frac{v}2\biggr)^{2j+\nu}
\end{equation*}
for $\nu\in\mathbb{R}$ and $v\in\mathbb{C}$ is the first kind modified Bessel function.
\par
The aim of this paper is to further discuss the functions $f_j(v)$, $g_j(v)$, and $\frak{g}_j(v)$ via the functions $F_j(v)$, $G_j(v)$, and their derivatives. Our main results include closed-form formulas in terms of the Bernoulli numbers and the second kind Stirling numbers, determinantal expressions, recursive relations, power series, logarithmic convexity of these functions or their special values, as well as a formula for a Hessenberg determinant in terms of the Bernoulli numbers.

\section{Preliminaries}

In~\cite[Definition~11.2]{Charalambides-book-2002} and~\cite[p.~134, Theorem~A]{Comtet-Combinatorics-74}, partial Bell polynomials, denoted by $\bell_{j,k}(v_1,v_2,\dotsc,v_{j-k+1})$ for $j\ge k\ge0$, are defined by
\begin{equation*}
\bell_{j,k}(v_1,v_2,\dotsc,v_{j-k+1})=\sum_{\substack{1\le i\le j-k+1\\ \ell_i\in\{0\}\cup\mathbb{N}\\ \sum_{i=1}^{j-k+1}i\ell_i=j\\
\sum_{i=1}^{j-k+1}\ell_i=k}}\frac{j!}{\prod_{i=1}^{j-k+1}\ell_i!} \prod_{i=1}^{j-k+1}\biggl(\frac{v_i}{i!}\biggr)^{\ell_i}.
\end{equation*}
The Fa\`a di Bruno formula can be described in terms of $\bell_{j,k}(v_1,v_2,\dotsc,v_{j-k+1})$ by
\begin{equation}\label{Bruno-Bell-Polynomial}
\frac{\td^j}{\td v^j}f\circ h(v)=\sum_{k=0}^nf^{(k)}(h(v)) \bell_{j,k}\bigl(h'(v),h''(v),\dotsc,h^{(j-k+1)}(v)\bigr), \quad j\ge0.
\end{equation}
See~\cite[Theorem~11.4]{Charalambides-book-2002} and~\cite[p.~139, Theorem~C]{Comtet-Combinatorics-74}.
The identities
\begin{align}
\bell_{j,k}\bigl(ab v_1,ab^2v_2,\dotsc,ab^{j-k+1}v_{j-k+1}\bigr) &=a^kb^j\bell_{j,k}(v_1,v_2,\dotsc,v_{j-k+1}), \label{Bell(n-k)}\\
\bell_{j,k}(1,1,\dotsc,1)&=S(j,k) \label{Bell-stirling}
\end{align}
for $j\ge k\ge0$ and $a,b\in\mathbb{C}$ can be found in~\cite[p.~412]{Charalambides-book-2002} and~\cite[p.~135]{Comtet-Combinatorics-74}.
The identity
\begin{equation}\label{B-S-frac-value}
\bell_{j,k}\biggl(\frac12, \frac13,\dotsc,\frac1{j-k+2}\biggr)
=\frac{j!}{(j+k)!}\sum_{\ell=0}^k(-1)^{k-\ell}\binom{j+k}{k-\ell}S(j+\ell,\ell)
\end{equation}
can be found in~\cite[Section~1.8, (1.16)]{Bell-value-elem-funct.tex} and the papers~\cite{MIA-4666.tex, 2Closed-Bern-Polyn2.tex, Special-Bell2Euler.tex, Zhang-Yang-Oxford-Taiwan-12} respectively, where the second kind Stirling numbers $S(j,k)$ can be generated by
\begin{equation*}
\frac{(\te^v-1)^k}{k!}=\sum_{j=k}^\infty S(j,k)\frac{v^j}{j!}.
\end{equation*}
\par
From~\cite[Theorem~2.1]{Eight-Identy-More.tex}, \cite[Theorem~2.1]{exp-derivative-sum-Combined.tex}, and~\cite[Theorem~3.1]{CAM-D-13-01430-Xu-Cen}, we can derive that
\begin{equation}\label{id-gen-new-form1}
\frac{\td^j}{\td v^j}\biggl(\frac1{1-\te^{-v}}\biggr)
=\sum_{\ell=0}^{j}(-1)^{\ell}\ell!S(j+1,\ell+1)\biggl(\frac1{1-\te^{-v}}\biggr)^{\ell+1}, \quad j\ge0.
\end{equation}
\par
Let $g(v)$ and $h(v)\ne0$ be two differentiable functions. Let $U_{(j+1)\times1}(v)$ be an $(j+1)\times1$ matrix whose elements $g_{\ell,1}(v)=g^{(\ell-1)}(v)$ for $1\le \ell\le j+1$, let $V_{(j+1)\times j}(v)$ be an $(j+1)\times j$ matrix whose elements
\begin{equation*}
h_{i,\ell}(v)=
\begin{cases}
\dbinom{i-1}{\ell-1}h^{(i-\ell)}(v), & i-\ell\ge0\\
0, & i-\ell<0
\end{cases}
\end{equation*}
for $1\le i\le j+1$ and $1\le \ell\le j$, and let $|W_{(j+1)\times(j+1)}(v)|$ denote the determinant of the $(j+1)\times(j+1)$ matrix
\begin{equation*}
W_{(j+1)\times(j+1)}(v)=\begin{pmatrix}U_{(j+1)\times1}(v) & V_{(j+1)\times j}(v)\end{pmatrix}.
\end{equation*}
Then the $j$th derivative of the ratio $\frac{g(v)}{h(v)}$ can be computed by
\begin{equation}\label{Sitnik-Bourbaki-reform}
\frac{\td^j}{\td v^j}\biggl[\frac{g(v)}{h(v)}\biggr]
=\frac{(-1)^j}{h^{j+1}(v)}\bigl|W_{(j+1)\times(j+1)}(v)\bigr|.
\end{equation}
This formula is a reformulation of~\cite[p.~40, Exercise~5)]{Bourbaki-Spain-2004}. See also~\cite[Lemma~2.4]{Axioms-1115448.tex}.
\par
The $j$th falling factorial of $v\in\mathbb{C}$ can be defined by
\begin{equation}\label{Fall-Factorial-Dfn-Eq}
\langle v\rangle_j=
\prod_{\ell=0}^{j-1}(v-\ell)=
\begin{cases}
v(v-1)\dotsm(v-j+1), & j\ge1;\\
1,& j=0.
\end{cases}
\end{equation}
\par
Let $H_0=1$ and
\begin{equation*}
H_j=
\begin{vmatrix}
h_{1,1} & h_{1,2} & 0 & \dotsc & 0 & 0\\
h_{2,1} & h_{2,2} & h_{2,3} & \dotsc & 0 & 0\\
h_{3,1} & h_{3,2} & h_{3,3} & \dotsc & 0 & 0\\
\vdots & \vdots & \vdots & \vdots & \vdots & \vdots\\
h_{j-2,1} & h_{j-2,2} & h_{j-2,3} & \dotsc & h_{j-2,j-1} & 0 \\
h_{j-1,1} & h_{j-1,2} & h_{j-1,3} & \dotsc & h_{j-1,j-1} & h_{j-1,j}\\
h_{j,1} & h_{j,2} & h_{j,3} & \dotsc & h_{j,j-1} & h_{j,j}
\end{vmatrix}
\end{equation*}
for $j\in\mathbb{N}$. Then the sequence $H_j$ for $j\ge0$ satisfies $H_1=h_{1,1}$ and
\begin{equation}\label{CollegeMJ-2002-Cahill-Thm}
H_j=\sum_{r=1}^j(-1)^{j-r}h_{j,r} \Biggl(\prod_{\ell=r}^{j-1}h_{\ell,\ell+1}\Biggr) H_{r-1}
\end{equation}
for $j\ge2$, where any empty product is assumed to be $1$. See~\cite[p.~222, Theorem]{CollegeMJ-2002-Cahill} and the proof of~\cite[Theorem~4.1]{Bess-Pow-Polyn-CMES.tex}.

\section{The power series of $f_j(v)$ and more}
In this section, via computing $G_j^{(k)}(0)$ for $k\ge0$ and $j\in\mathbb{N}$, we obtain power series of $f_j(v)$.

\begin{thm}\label{exp-t-n-power-lem}
The following conclusions are valid.
\begin{enumerate}
\item
The derivatives of the completely monotonic function $F_1(v)$ defined on $(-\infty,\infty)$ by~\eqref{F-1(v)-Eq} satisfy
\begin{equation}\label{F(0)-deriv-values}
F_1^{(k)}(0)=\frac{(-1)^k}{k+1}, \quad k\ge0.
\end{equation}
\item
For $k\ge0$ and $j\in\mathbb{N}$, we have\small
\begin{equation}\label{G(n-k-0)-value-eq2}
G_j^{(k)}(0)=
\begin{dcases}
0, & k<j-1;\\
(-1)^{k-j+1}k!(k-j+2)!\sum_{q=0}^{k-j+1}\frac{(-1)^{q}}{q+1} \frac{S(k-j+q+1,q)}{(k-j-q+1)!(k-j+q+1)!}, & k\ge j-1.
\end{dcases}
\end{equation}\normalsize
In particular, we have
\begin{align}\label{G(n)=0-4values}
G_j^{(j-1)}(0)&=(j-1)!, & G_j^{(j)}(0)&=f_j(0)=\frac{j!}{2}, \\
G_j^{(j+1)}(0)&=f_j'(0)=\frac{(j+1)!}{12}, & G_j^{(j+2)}(0)&=0
\label{G(n)=0-4values-later2}
\end{align}
for $j\in\mathbb{N}$.
\item
For $j\in\mathbb{N}$, the power series of $f_j(v)$ is
\begin{equation}\label{Maclaurin-f(n)(t)}
f_j(v)=\sum_{i=0}^{\infty} (-1)^{i+1}(i+1)(i+2)(j+i)! \Biggl[\sum_{q=0}^{i+1}\frac{(-1)^{q}}{q+1} \frac{S(i+q+1,q)}{(i-q+1)!(i+q+1)!}\Biggr] v^i.
\end{equation}
\end{enumerate}
\end{thm}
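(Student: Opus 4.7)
For part~(1), I would differentiate the integral representation $F_1(v)=\int_{0}^{1}\te^{-vu}\td u$ from~\eqref{F-1(v)-Eq} under the integral sign to obtain $F_1^{(k)}(v)=(-1)^k\int_{0}^{1}u^k\te^{-vu}\td u$, and then set $v=0$ to read off~\eqref{F(0)-deriv-values}. For part~(2), the key structural observation is that $G_j(v)=v^{j-1}/F_1(v)$, and since $F_1(0)=1$ the reciprocal $1/F_1$ is analytic near the origin. Applying the Leibniz rule to the product $v^{j-1}\cdot(1/F_1(v))$ and noting that $(v^{j-1})^{(m)}\big|_{v=0}$ vanishes unless $m=j-1$, in which case it equals $(j-1)!$, I obtain $G_j^{(k)}(0)=0$ for $k<j-1$ and, for $k\ge j-1$,
\begin{equation*}
G_j^{(k)}(0)=\frac{k!}{(k-j+1)!}\biggl(\frac{1}{F_1}\biggr)^{(k-j+1)}(0).
\end{equation*}

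The remaining task in part~(2) is therefore to evaluate these reciprocal derivatives, which I would do via the Fa\`a di Bruno formula~\eqref{Bruno-Bell-Polynomial} applied to $f(u)=1/u$ composed with $F_1$, giving
\begin{equation*}
\biggl(\frac{1}{F_1}\biggr)^{(m)}(0)=\sum_{\ell=0}^{m}(-1)^\ell\ell!\,\bell_{m,\ell}\bigl(F_1'(0),F_1''(0),\dotsc,F_1^{(m-\ell+1)}(0)\bigr).
\end{equation*}
Substituting $F_1^{(i)}(0)=(-1)^{i}/(i+1)$ from part~(1) and pulling the alternating signs out through the scaling identity~\eqref{Bell(n-k)} with $a=1$, $b=-1$ reduces each Bell polynomial to $(-1)^m\bell_{m,\ell}\bigl(1/2,1/3,\dotsc,1/(m-\ell+2)\bigr)$, which is given in closed form by~\eqref{B-S-frac-value}. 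After swapping the order of the resulting double summation and collapsing the inner $\ell$-sum via the hockey-stick identity $\sum_{\ell=p}^{m}\binom{\ell}{p}=\binom{m+1}{p+1}$, the whole expression shrinks to a single $p$-sum; substituting $m=k-j+1$ then reproduces exactly~\eqref{G(n-k-0)-value-eq2}.

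The four special values~\eqref{G(n)=0-4values}--\eqref{G(n)=0-4values-later2} follow by inserting $k\in\{j-1,j,j+1,j+2\}$ into~\eqref{G(n-k-0)-value-eq2} together with the elementary Stirling values $S(0,0)=1$, $S(n,0)=0$ for $n\ge1$, $S(n,1)=1$, $S(4,2)=7$, $S(5,2)=15$, and $S(6,3)=90$; in particular, for $k=j+2$ the three nonzero Stirling terms cancel, reproducing $G_j^{(j+2)}(0)=0$. For part~(3), since $f_j(v)=G_j^{(j)}(v)$, its $i$-th Maclaurin coefficient equals $G_j^{(j+i)}(0)/i!$; setting $k=j+i$ in~\eqref{G(n-k-0)-value-eq2} and simplifying $(i+2)!/i!=(i+1)(i+2)$ directly delivers~\eqref{Maclaurin-f(n)(t)}. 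I expect the main technical obstacle to be the bookkeeping of signs in the double-sum reduction of part~(2): three independent sign factors --- one from $F_1^{(i)}(0)$, one from $f^{(\ell)}(1)=(-1)^\ell\ell!$, and one from the scaling~\eqref{Bell(n-k)} with $b=-1$ --- must be consolidated correctly before the hockey-stick identity cleanly produces the single-sum form appearing in~\eqref{G(n-k-0)-value-eq2}.
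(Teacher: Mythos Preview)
Your proposal is correct and follows essentially the same route as the paper: differentiate the integral representation for part~(1); for part~(2) write $G_j(v)=v^{j-1}/F_1(v)$, apply Leibniz so that only the $\ell=j-1$ term survives at $v=0$, expand $(1/F_1)^{(m)}(0)$ by Fa\`a di Bruno with the Bell-polynomial identities~\eqref{Bell(n-k)} and~\eqref{B-S-frac-value}, interchange summations, and collapse via the hockey-stick identity $\sum_{m=q}^{k-j+1}m!/(m-q)!=(k-j+2)!/[(q+1)(k-j-q+1)!]$; part~(3) then follows from $f_j^{(i)}(0)=G_j^{(j+i)}(0)$. The only cosmetic difference is that the paper carries the full Leibniz--Fa\`a di Bruno double sum for $G_j^{(k)}(v)$ before passing to $v=0$, whereas you first take $v\to0$ to isolate $(1/F_1)^{(k-j+1)}(0)$ and only then apply Fa\`a di Bruno; the computations are otherwise identical.
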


\begin{proof}
For $k\ge0$, by the integral representation~\eqref{F-1(v)-Eq}, it follows that the derivatives of $F_1(v)$ are
\begin{equation*}
F_1^{(k)}(v)=(-1)^k\int_{0}^{1}u^k\te^{-v u}\td u
\end{equation*}
and satisfy
\begin{equation*}
F_1^{(k)}(0)=(-1)^k\int_{0}^{1}u^k\td u=\frac{(-1)^k}{k+1}.
\end{equation*}
\par
The function $G_j(v)$ defined by~\eqref{G(n)(t)-dfn} can be rewritten as
\begin{equation}\label{G-n(t)-F1(t)-Eq}
G_j(v)=\frac{v^{j-1}}{(1-\te^{-v})/v}=\frac{v^{j-1}}{F_1(v)}.
\end{equation}
By virtue of the Leibnz rule for differentiation and the Fa\`a di Bruno formula~\eqref{Bruno-Bell-Polynomial}, we acquire
\begin{align*}
G_j^{(k)}(v)&=\sum_{\ell=0}^{k}\binom{k}{\ell}\langle j-1\rangle_{\ell}v^{j-\ell-1}\biggl[\frac{1}{F_1(v)}\biggr]^{(k-\ell)}\\
&=\sum_{\ell=0}^{k}\binom{k}{\ell}\langle j-1\rangle_{\ell}v^{j-\ell-1} \sum_{q=0}^{k-\ell}\frac{(-1)^q q!}{[F_1(v)]^{q+1}} \bell_{k-\ell,q}\bigl(F_1'(v),F_1''(v),\dotsc,F_1^{(k-\ell-q+1)}(v)\bigr),
\end{align*}
where $\langle z\rangle_\ell$ is the falling factorial defined by~\eqref{Fall-Factorial-Dfn-Eq}.
Making use of the formula~\eqref{F(0)-deriv-values} and the identities~\eqref{Bell(n-k)} and~\eqref{B-S-frac-value} gives
\begin{equation}
\begin{aligned}\label{bell(0)F(1)-values}
&\quad\lim_{v\to0}\bell_{k-\ell,q}\bigl(F_1'(v),F_1''(v),\dotsc,F_1^{(k-\ell-q+1)}(v)\bigr)\\
&=\bell_{k-\ell,q}\bigl(F_1'(0),F_1''(0),\dotsc,F_1^{(k-\ell-q+1)}(0)\bigr)\\
&=\bell_{k-\ell,q}\biggl(-\frac{1}{2},\frac{1}{3},\dotsc, \frac{(-1)^{k-\ell-q+1}}{k-\ell-q+2}\biggr)\\
&=(-1)^{k-\ell}\bell_{k-\ell,q}\biggl(\frac{1}{2},\frac{1}{3},\dotsc, \frac{1}{k-\ell-q+2}\biggr)\\
&=(-1)^{k-\ell+q}\frac{(k-\ell)!}{(k-\ell+q)!}\sum_{p=0}^q(-1)^{p}\binom{k-\ell+q}{q-p}S(k-\ell+p,p),
\end{aligned}
\end{equation}
where we used the identity~\eqref{Bell(n-k)} and the formula~\eqref{B-S-frac-value}.
Consequently, it follows that
\begin{enumerate}
\item
when $k<j-1$, we have\small
\begin{align*}
\lim_{v\to0}G_j^{(k)}(v)&=\sum_{\ell=0}^{k}\binom{k}{\ell}\lim_{v\to0}\bigl[\langle j-1\rangle_{\ell}v^{j-\ell-1}\bigr] \sum_{m=0}^{k-\ell}(-1)^mm! \bell_{k-\ell,m}\bigl(F_1'(0),F_1''(0),\dotsc, F_1^{(k-\ell-m+1)}(0)\bigr)\\
&=0;
\end{align*}\normalsize
\item
when $k\ge j-1$, we have\small
\begin{align*}
\lim_{v\to0}G_j^{(k)}(v)&=\sum_{\ell=0}^{k}\binom{k}{\ell}\lim_{v\to0}\bigl[\langle j-1\rangle_{\ell}v^{j-\ell-1}\bigr] \sum_{m=0}^{k-\ell}(-1)^mm! \bell_{k-\ell,m}\bigl(F_1'(0),F_1''(0),\dotsc, F_1^{(k-\ell-m+1)}(0)\bigr)\\
&=\binom{k}{j-1}(j-1)!\sum_{m=0}^{k-j+1}(-1)^mm! \bell_{k-j+1,m}\bigl(F_1'(0),F_1''(0),\dotsc, F_1^{(k-j-m+2)}(0)\bigr)\\
&=\frac{k!}{(k-j+1)!}\sum_{m=0}^{k-j+1}(-1)^mm! (-1)^{k-j+1+m}\frac{(k-j+1)!}{(k-j+1+m)!}\\
&\quad\times\sum_{q=0}^m(-1)^{q}\binom{k-j+1+m}{m-q}S(k-j+1+q,q)\\
&=(-1)^{k-j+1}k!\sum_{m=0}^{k-j+1}\frac{m!}{(k-j+1+m)!} \sum_{q=0}^m(-1)^{q}\binom{k-j+1+m}{k-j+1+q}S(k-j+1+q,q)\\
&=(-1)^{k-j+1}k!\sum_{m=0}^{k-j+1} m!\sum_{q=0}^m\frac{(-1)^{q}}{(m-q)!} \frac{S(k-j+q+1,q)}{(k-j+q+1)!}\\
&=(-1)^{k-j+1}k!\sum_{q=0}^{k-j+1}(-1)^{q}\frac{S(k-j+q+1,q)}{(k-j+q+1)!}\sum_{m=q}^{k-j+1} \frac{m!}{(m-q)!}\\
&=(-1)^{k-j+1}k!(k-j+2)!\sum_{q=0}^{k-j+1}\frac{(-1)^{q}}{q+1} \frac{S(k-j+q+1,q)}{(k-j-q+1)!(k-j+q+1)!}.
\end{align*}\normalsize
\end{enumerate}
\par
The power series~\eqref{Maclaurin-f(n)(t)} comes from the relation $G_j^{(j+i)}(v)=f_j^{(i)}(v)$ and the formula~\eqref{G(n-k-0)-value-eq2}.
The proof of Theorem~\ref{exp-t-n-power-lem} is complete.
\end{proof}

\section{A closed-form formula of $f_j(v)$ and more}

In this section, via computing $G_j^{(k)}(v)$ for $j\in\mathbb{N}$ and $k\in\{0\}\cup\mathbb{N}$, we establish a closed-form formula of the function $f_j(v)$.

\begin{thm}\label{First=Formula-f(j)-thm}
For $j\in\mathbb{N}$ and $k\in\{0\}\cup\mathbb{N}$, the derivatives $G_j^{(k)}(v)$ can be computed by
\begin{equation*}
G_j^{(k)}(v)=v^{j-k-1}\sum_{q=0}^{k}\Biggl[\sum_{\ell=q}^{k}(-1)^{\ell-q}\binom{k}{\ell}\langle j\rangle_{k-\ell} (\ell-q)!S(\ell+1,\ell-q+1)\biggl(\frac{v}{1-\te^{-v}}\biggr)^{\ell-q+1}\Biggr]v^q.
\end{equation*}
In particular, we have
\begin{align*}
f_j(v)&=\frac{1}{v}\sum_{q=0}^{j}\Biggl[\sum_{\ell=q}^{j}(-1)^{\ell-q}\binom{j}{\ell}\langle j\rangle_{j-\ell} (\ell-q)!S(\ell+1,\ell-q+1)\biggl(\frac{v}{1-\te^{-v}}\biggr)^{\ell-q+1}\Biggr]v^q,\\
G_j^{(k)}(0)&=
\begin{dcases}
0, & k<j-1;\\
\sum_{q=0}^{j-1}(-1)^qq!\binom{k}{q+k-j+1}\langle j\rangle_{j-q-1} S(q+k-j+2,q+1), & k\ge j-1,
\end{dcases}
\end{align*}
and those two equalities in~\eqref{G(n)=0-4values} and~\eqref{G(n)=0-4values-later2}.
\end{thm}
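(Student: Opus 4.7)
The plan is to apply the general Leibniz rule to the factorization $G_j(v)=v^j\cdot\frac{1}{1-\te^{-v}}$, obtaining
\begin{equation*}
G_j^{(k)}(v)=\sum_{\ell=0}^{k}\binom{k}{\ell}\langle j\rangle_{k-\ell}\,v^{j-k+\ell}\left(\frac{1}{1-\te^{-v}}\right)^{(\ell)},
\end{equation*}
and then to invoke the identity~\eqref{id-gen-new-form1} to rewrite $\bigl(\frac{1}{1-\te^{-v}}\bigr)^{(\ell)}$ as the finite sum $\sum_{m=0}^{\ell}(-1)^m m!\,S(\ell+1,m+1)\bigl(1-\te^{-v}\bigr)^{-(m+1)}$. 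I would then convert each factor $\bigl(1-\te^{-v}\bigr)^{-(m+1)}$ into $v^{-(m+1)}\bigl(\frac{v}{1-\te^{-v}}\bigr)^{m+1}$ in order to surface the bounded analytic factor $\frac{v}{1-\te^{-v}}$ and pull out the common power $v^{j-k-1}$. Setting $q=\ell-m$ and swapping the order of summation reindexes the set $(\ell,m)\in\{0,\dots,k\}\times\{0,\dots,\ell\}$ to $(q,\ell)\in\{0,\dots,k\}\times\{q,\dots,k\}$, introducing the residual power $v^q$ and the exponent $\ell-q+1$ on $\frac{v}{1-\te^{-v}}$; this directly yields the stated closed form, and the formula for $f_j(v)$ follows by specializing $k=j$.

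For the boundary values $G_j^{(k)}(0)$ I would pass to the limit $v\to 0$ in the closed form, using $\frac{v}{1-\te^{-v}}\to 1$. Each summand carries a power $v^{j-k-1+q}$, so for $k<j-1$ all terms have strictly positive $v$-power and $G_j^{(k)}(0)=0$. For $k\ge j-1$ the nontrivial contribution concentrates at $q=k-j+1$: summands with $q>k-j+1$ vanish outright, and summands with $q<k-j+1$ introduce negative $v$-powers that must cancel against each other because $G_j(v)$ is genuinely analytic at the origin. Reindexing the surviving inner sum by $\tilde q=\ell-(k-j+1)\in\{0,\dots,j-1\}$ converts it into the stated finite sum $\sum_{\tilde q=0}^{j-1}(-1)^{\tilde q}\tilde q!\binom{k}{\tilde q+k-j+1}\langle j\rangle_{j-\tilde q-1}S(\tilde q+k-j+2,\tilde q+1)$. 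The four explicit special values in~\eqref{G(n)=0-4values}--\eqref{G(n)=0-4values-later2} then fall out upon inserting $k\in\{j-1,j,j+1,j+2\}$ and using elementary Stirling-number evaluations.

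The main obstacle will be the careful bookkeeping of the pole cancellations at $v=0$: each individual summand in the closed form is meromorphic there, yet the total sum is analytic, so the singular parts must cancel exactly. A cleaner route that sidesteps any pole analysis is to restart from the equivalent factorization $G_j(v)=v^{j-1}\cdot\frac{v}{1-\te^{-v}}$ of~\eqref{G-n(t)-F1(t)-Eq}: Leibniz applied there leaves only the single index with $k-\ell=j-1$ surviving at $v=0$, reducing the computation to $G_j^{(k)}(0)=\binom{k}{k-j+1}(j-1)!\,G_1^{(k-j+1)}(0)$, after which the previously derived closed form for $G_j^{(k)}$ specialized to $j=1$ expands $G_1^{(k-j+1)}(0)$ into the claimed Stirling--binomial sum.
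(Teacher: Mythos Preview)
Your proposal is correct and follows essentially the same route as the paper: Leibniz on $G_j(v)=v^j\cdot(1-\te^{-v})^{-1}$, the identity~\eqref{id-gen-new-form1}, the rewrite $(1-\te^{-v})^{-(m+1)}=v^{-(m+1)}\bigl(\tfrac{v}{1-\te^{-v}}\bigr)^{m+1}$, and the reindexing $q=\ell-m$ with swapped summation order are exactly the steps in the paper's proof. The only cosmetic difference is in justifying the cancellation of the $q<k-j+1$ terms at $v=0$: the paper appeals to the finiteness of $G_j^{(k)}(0)$ established independently in Theorem~\ref{exp-t-n-power-lem}, whereas you invoke analyticity of $G_j$ at the origin (which amounts to the same thing) and also sketch the cleaner alternative via $G_j(v)=v^{j-1}G_1(v)$, a route the paper does not take here.
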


\begin{proof}
By the Leibniz rule for differentiation of a product and with the aid of the derivative formula~\eqref{id-gen-new-form1}, we have
\begin{align*}
G_j^{(k)}(v)&=\sum_{\ell=0}^{k}\binom{k}{\ell}\biggl(\frac{1}{1-\te^{-v}}\biggr)^{(\ell)} (v^j)^{(k-\ell)}\\
&=\sum_{\ell=0}^{k}\binom{k}{\ell}\langle j\rangle_{k-\ell} \Biggl[\sum_{m=0}^{\ell}(-1)^{m}m!S(\ell+1,m+1)\biggl(\frac1{1-\te^{-v}}\biggr)^{m+1}\Biggr] v^{j-k+\ell}\\
&=\sum_{\ell=0}^{k}\binom{k}{\ell}\langle j\rangle_{k-\ell} \Biggl[\sum_{m=0}^{\ell}(-1)^{m}m!S(\ell+1,m+1)\biggl(\frac{v}{1-\te^{-v}}\biggr)^{m+1}\frac{1}{v^{m+1}}\Biggr] v^{j-k+\ell}\\
&=\sum_{\ell=0}^{k}\binom{k}{\ell}\langle j\rangle_{k-\ell} \Biggl[\sum_{m=0}^{\ell}(-1)^{m}m!S(\ell+1,m+1)\biggl(\frac{v}{1-\te^{-v}}\biggr)^{m+1} v^{\ell-m}\Biggr] v^{j-k-1}\\
&=v^{j-k-1}\sum_{\ell=0}^{k}\binom{k}{\ell}\langle j\rangle_{k-\ell} \Biggl[\sum_{q=0}^{\ell}(-1)^{\ell-q}(\ell-q)!S(\ell+1,\ell-q+1)\biggl(\frac{v}{1-\te^{-v}}\biggr)^{\ell-q+1} v^q\Biggr]\\
&=v^{j-k-1}\sum_{q=0}^{k}\Biggl[\sum_{\ell=q}^{k}(-1)^{\ell-q}\binom{k}{\ell}\langle j\rangle_{k-\ell} (\ell-q)!S(\ell+1,\ell-q+1)\biggl(\frac{v}{1-\te^{-v}}\biggr)^{\ell-q+1}\Biggr]v^q.
\end{align*}
If $0\le k<j-1$, then it is clear that $G_j^{(k)}(0)=0$. If $k=j-1$, then
\begin{align*}
G_j^{(j-1)}(v)&=\sum_{q=0}^{j-1}\Biggl[\sum_{\ell=q}^{j-1}(-1)^{\ell-q}\binom{j-1}{\ell}\langle j\rangle_{j-\ell-1} (\ell-q)!S(\ell+1,\ell-q+1)\biggl(\frac{v}{1-\te^{-v}}\biggr)^{\ell-q+1}\Biggr]v^q\\
&\to\sum_{\ell=0}^{j-1}(-1)^{\ell}\binom{j-1}{\ell}\langle j\rangle_{j-\ell-1}\ell!, \quad v\to0\\
&=(j-1)!\sum_{\ell=0}^{j-1}(-1)^\ell\binom{j}{\ell+1}\\
&=(j-1)!.
\end{align*}
If $k>j-1$, then\small
\begin{align*}
\sum_{\ell=q}^{k}(-1)^{\ell-q}\binom{k}{\ell}\langle j\rangle_{k-\ell} (\ell-q)!S(\ell+1,\ell-q+1)
&=\sum_{\ell=0}^{k-q}(-1)^\ell \ell!\binom{k}{\ell+q}\langle j\rangle_{k-\ell-q}S(\ell+q+1,\ell+1)\\
&=0, \quad 0\le q<k-j+1
\end{align*}\normalsize
and
\begin{align*}
G_j^{(k)}(0)&=\sum_{\ell=k-j+1}^{k}(-1)^{\ell-(k-j+1)}\binom{k}{\ell}\langle j\rangle_{k-\ell} [\ell-(k-j+1)]!S(\ell+1,\ell-(k-j+1)+1)\\
&=\sum_{q=0}^{j-1}(-1)^qq!\binom{k}{q+k-j+1}\langle j\rangle_{j-q-1} S(q+k-j+2,q+1),
\end{align*}
where we used the finity of $G_j^{(k)}(0)$ for $k\ge0$ and $j\in\mathbb{N}$. The finity of $G_j^{(k)}(0)$ for $k\ge0$ and $j\in\mathbb{N}$ can be seen from the formula~\eqref{G(n-k-0)-value-eq2}.
The proof of Theorem~\ref{First=Formula-f(j)-thm} is complete.
\end{proof}

\section{A determinantal expression of $f_j(v)$ and more}

In this section, with the help of the formula~\eqref{Sitnik-Bourbaki-reform} for derivatives of the ratio of two differentiable functions, we simply establish determinantal formulas for the functions $G_j^{(k)}(v)$ and $f_j(v)$.

\begin{thm}\label{f(n-k)-id-Thm}
For $j\in\mathbb{N}$ and $k\ge0$, we have\small
\begin{equation}\label{f(n-k)-identity}
G_j^{(k)}(v)=\frac{(-1)^k}{F_1^{k+1}(v)}
\begin{vmatrix}
\langle j-1\rangle_{0}v^{j-1} & F_1(v) & 0 & \dotsm & 0& 0\\
\langle j-1\rangle_{1}v^{j-2} & F_1'(v) & F_1(v) & \dotsm & 0& 0\\
\langle j-1\rangle_{2}v^{j-3} & F_1''(v) & \binom{2}1F_1'(v) & \dotsm & 0& 0\\
\vdots & \vdots & \vdots & \ddots & \vdots & \vdots\\
\langle j-1\rangle_{k-2}v^{j-k+1} & F_1^{(k-2)}(v) & \binom{k-2}1F_1^{(k-3)}(v) &  \dotsm & F_1(v) & 0\\
\langle j-1\rangle_{k-1}v^{j-k} & F_1^{(k-1)}(v) & \binom{k-1}1F_1^{(k-2)}(v) &  \dotsm & \binom{k-1}{k-2}F_1'(v) & F_1(v)\\
\langle j-1\rangle_{k}v^{j-k-1} & F_1^{(k)}(v) & \binom{k}1F_1^{(k-1)}(v) & \dotsm & \binom{k}{k-2}F_1''(v) & \binom{k}{k-1}F_1'(v)
\end{vmatrix}.
\end{equation}\normalsize
In particular, we obtain\small
\begin{align}\label{f(n-k=n)-identity}
f_j(v)&=\frac{(-1)^j}{F_1^{j+1}(v)}
\begin{vmatrix}
\langle j-1\rangle_{0}v^{j-1} & F_1(v) & 0 & \dotsm & 0& 0\\
\langle j-1\rangle_{1}v^{j-2} & F_1'(v) & F_1(v) & \dotsm & 0& 0\\
\langle j-1\rangle_{2}v^{j-3} & F_1''(v) & \binom{2}1F_1'(v) & \dotsm & 0& 0\\
\vdots & \vdots & \vdots & \ddots & \vdots & \vdots\\
\langle j-1\rangle_{j-2}v & F_1^{(j-2)}(v) & \binom{j-2}1F_1^{(j-3)}(v) &  \dotsm & F_1(v) & 0\\
(j-1)! & F_1^{(j-1)}(v) & \binom{j-1}1F_1^{(j-2)}(v) &  \dotsm & \binom{j-1}{j-2}F_1'(v) & F_1(v)\\
0 & F_1^{(j)}(v) & \binom{j}1F_1^{(j-1)}(v) & \dotsm & \binom{j}{j-2}F_1''(v) & \binom{j}{j-1}F_1'(v)
\end{vmatrix},\\
G_j^{(k)}(0)&=0, \quad 0\le k<j-1,\label{G(n)(k)(0)=0-Eq}
\end{align}\normalsize
and those two equalities in~\eqref{G(n)=0-4values} and~\eqref{G(n)=0-4values-later2}.
\end{thm}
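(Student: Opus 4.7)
The plan is to apply the ratio-derivative formula~\eqref{Sitnik-Bourbaki-reform} directly to the factorization $G_j(v)=\frac{v^{j-1}}{F_1(v)}$ recorded in~\eqref{G-n(t)-F1(t)-Eq}. With the choices $g(v)=v^{j-1}$ and $h(v)=F_1(v)$, and with the order of differentiation taken to be $k$ rather than the $j$ appearing in the statement of~\eqref{Sitnik-Bourbaki-reform}, the column $U_{(k+1)\times 1}(v)$ acquires entries $g^{(\ell-1)}(v)=\langle j-1\rangle_{\ell-1}\,v^{j-\ell}$ for $\ell=1,\dotsc,k+1$, exactly matching the leftmost column of the determinant in~\eqref{f(n-k)-identity}; while the block $V_{(k+1)\times k}(v)$ with $h_{i,\ell}(v)=\binom{i-1}{\ell-1}F_1^{(i-\ell)}(v)$ for $i\ge\ell$ (and zero otherwise) reproduces the lower-Hessenberg pattern shown to the right. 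The prefactor $(-1)^k/F_1^{k+1}(v)$ is already dictated by~\eqref{Sitnik-Bourbaki-reform}, so~\eqref{f(n-k)-identity} follows at once.

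The particular case~\eqref{f(n-k=n)-identity} for $f_j(v)=G_j^{(j)}(v)$ then follows by substituting $k=j$. Since $v^{j-1}$ is a polynomial of degree $j-1$, one has $\langle j-1\rangle_{j-1}=(j-1)!$ and $\langle j-1\rangle_{j}=0$, so the penultimate row of the first column collapses to $(j-1)!$ and the last row to $0$, precisely as displayed.

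For~\eqref{G(n)(k)(0)=0-Eq}, I would set $v=0$ in~\eqref{f(n-k)-identity}. When $0\le k<j-1$, each first-column entry $\langle j-1\rangle_{\ell-1}\,v^{j-\ell}$ with $1\le \ell\le k+1$ carries a strictly positive power $v^{j-\ell}$ (because $j-\ell\ge j-k-1\ge 1$) and hence vanishes at the origin. By~\eqref{F(0)-deriv-values} with $k=0$ one has $F_1(0)=1$, so the scalar prefactor $(-1)^k/F_1^{k+1}(0)$ is finite, and the whole determinant equals $0$. The four special values listed in~\eqref{G(n)=0-4values} and~\eqref{G(n)=0-4values-later2} were already established in Theorem~\ref{exp-t-n-power-lem} and can simply be quoted; alternatively they may be recovered by expanding the low-order determinants along the first column using~\eqref{F(0)-deriv-values}.

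The main obstacle is almost entirely notational. One must confirm that the lower-Hessenberg pattern of binomial coefficients and derivatives of $F_1$ that emerges from the general formula~\eqref{Sitnik-Bourbaki-reform} is identical to the matrix displayed in~\eqref{f(n-k)-identity}, and in particular that the convention $\binom{i-1}{\ell-1}=0$ for $i<\ell$ accounts for the block of zeros above the superdiagonal. No combinatorial identity requires proof here; the theorem is essentially a direct specialization of the determinantal formula for the derivatives of a quotient to the particular quotient $v^{j-1}/F_1(v)$.
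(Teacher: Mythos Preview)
Your proposal is correct and follows essentially the same approach as the paper: the paper also applies the determinantal ratio-derivative formula~\eqref{Sitnik-Bourbaki-reform} directly to $G_j(v)=v^{j-1}/F_1(v)$ with $g(v)=v^{j-1}$ and $h(v)=F_1(v)$, then sets $k=j$ to obtain~\eqref{f(n-k=n)-identity} and $v=0$ to obtain~\eqref{G(n)(k)(0)=0-Eq}. Your write-up is in fact more explicit than the paper's in justifying why the first column vanishes when $0\le k<j-1$ and in noting that $F_1(0)=1$ keeps the prefactor finite.
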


\begin{proof}
From the integral representation~\eqref{F-1(v)-Eq} and the equation~\eqref{G-n(t)-F1(t)-Eq} or from the reciprocal of the equation~\eqref{F(n)(t)-Int-Eq}, we obtain
\begin{equation*}
G_j^{(k)}(v)=\frac{\td^k}{\td v^k}\biggl[\frac{v^{j-1}}{(1-\te^{-v})/v}\biggr]
=\frac{\td^k}{\td v^k}\biggl[\frac{v^{j-1}}{F_1(v)}\biggr].
\end{equation*}
Applying the formula~\eqref{Sitnik-Bourbaki-reform} to the functions $u(v)=v^{j-1}$ and $v(v)=F_1(v)$ directly results in the determinantal expression~\eqref{f(n-k)-identity}.
\par
Setting $k=j$ in~\eqref{f(n-k)-identity} gives~\eqref{f(n-k=n)-identity}.
\par
Taking $v=0$ in~\eqref{f(n-k)-identity} yields~\eqref{G(n)(k)(0)=0-Eq}.
The proof of Theorem~\ref{f(n-k)-id-Thm} is complete.
\end{proof}

\section{A recursive relation of $f_j(v)$ and more}

In this section, by virtue of the recursive relation~\eqref{CollegeMJ-2002-Cahill-Thm}, we simply derive recursive relations for the functions $G_j^{(k)}(v)$ and $f_j(v)$.

\begin{thm}\label{f(j)-Clark-recur-thm}
For $j\in\mathbb{N}$ and $k\ge0$, we have
\begin{equation}\label{f(j)-Clark-recur-eq}
G_j^{(k)}(v)=\frac{1}{F_1(v)}\Biggl[\langle j-1\rangle_{k}v^{j-k-1} -\sum_{r=0}^{k-1}\binom{k}{r}F_1^{(k-r)}(v) G_j^{(r)}(v)\Biggr].
\end{equation}
In particular, we have
\begin{equation}\label{f(j)-k=j-Clark-recur-eq}
f_j(v)=-\frac{1}{F_1(v)}\sum_{r=0}^{j-1}\binom{j}{r}F_1^{(j-r)}(v) G_j^{(r)}(v),
\end{equation}
the first equality in~\eqref{G(n)=0-4values}, and those equalities in~\eqref{G(n)(k)(0)=0-Eq} for $0\le k<j-1$ and $j\in\mathbb{N}$.
\end{thm}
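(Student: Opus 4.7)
The plan is to apply the Hessenberg-determinant recursion~\eqref{CollegeMJ-2002-Cahill-Thm} directly to the determinantal expression~\eqref{f(n-k)-identity} already proved in Theorem~\ref{f(n-k)-id-Thm}. Write $H_{k+1}(v)$ for the $(k+1)\times(k+1)$ determinant on the right-hand side of~\eqref{f(n-k)-identity}, so that
\begin{equation*}
G_j^{(k)}(v)=\frac{(-1)^k}{F_1^{k+1}(v)}H_{k+1}(v).
\end{equation*}
The crucial observation is that the leading principal $(r-1)\times(r-1)$ submatrix of this matrix is \emph{itself} the determinantal representation of $G_j^{(r-2)}(v)$ given by~\eqref{f(n-k)-identity} (with $k$ replaced by $r-2$); therefore
\begin{equation*}
H_{r-1}(v)=(-1)^{r-2}F_1^{r-1}(v)G_j^{(r-2)}(v)
\end{equation*}
for every $r\ge 2$, while $H_0(v)=1$ by convention.

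Next, I would read off the last-row entries of the matrix, namely $h_{k+1,1}=\langle j-1\rangle_k v^{j-k-1}$ and $h_{k+1,r}=\binom{k}{r-2}F_1^{(k-r+2)}(v)$ for $2\le r\le k+1$, and note that the super-diagonal entries are all equal to $F_1(v)$, so $\prod_{\ell=r}^{k}h_{\ell,\ell+1}=F_1^{k-r+1}(v)$. Substituting these data into~\eqref{CollegeMJ-2002-Cahill-Thm} and then replacing each $H_{r-1}(v)$ by the identification above, the $r=1$ term contributes $(-1)^k\langle j-1\rangle_k v^{j-k-1}F_1^{k}(v)$, while each $r\ge 2$ term acquires a total sign $(-1)^{k+1-r}(-1)^{r-2}=(-1)^{k-1}$ and a common power $F_1^{k}(v)$. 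After the substitution $s=r-2$ in the sum, one obtains
\begin{equation*}
(-1)^kF_1^{k+1}(v)G_j^{(k)}(v)=(-1)^kF_1^{k}(v)\Biggl[\langle j-1\rangle_k v^{j-k-1}-\sum_{s=0}^{k-1}\binom{k}{s}F_1^{(k-s)}(v)G_j^{(s)}(v)\Biggr],
\end{equation*}
and dividing through by $(-1)^kF_1^{k+1}(v)$ yields~\eqref{f(j)-Clark-recur-eq}.

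Finally, I would derive the special cases. Setting $k=j$ in~\eqref{f(j)-Clark-recur-eq} kills the leading term since $\langle j-1\rangle_j$ contains the factor $(j-1)-(j-1)=0$, leaving~\eqref{f(j)-k=j-Clark-recur-eq}. The boundary values at $v=0$ follow by induction on $k$: for $k=0$ the recursion gives $G_j(v)=v^{j-1}/F_1(v)$, vanishing at $v=0$ when $j>1$; for $0<k<j-1$ the factor $v^{j-k-1}$ still vanishes at $v=0$ and every summand vanishes by the inductive hypothesis; and for $k=j-1$ the factor $v^{j-k-1}$ equals $1$, the sum is zero by induction, $F_1(0)=1$, and $\langle j-1\rangle_{j-1}=(j-1)!$, producing $G_j^{(j-1)}(0)=(j-1)!$.

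The only real obstacle is bookkeeping: the signs $(-1)^{k+1-r}(-1)^{r-2}$ and the powers $F_1^{(k-r+2)}(v)\cdot F_1^{k-r+1}(v)\cdot F_1^{r-1}(v)=F_1^{(k-r+2)}(v)\cdot F_1^{k}(v)$ must be tracked with care, after which the algebra is entirely routine. For completeness I note that the same identity can be obtained in one line by differentiating $G_j(v)F_1(v)=v^{j-1}$ exactly $k$ times via the Leibniz rule and solving for $G_j^{(k)}(v)$; this serves as an independent confirmation but sidesteps the Hessenberg framework emphasised in the section.
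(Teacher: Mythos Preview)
Your proof is correct and follows essentially the same route as the paper: you apply the Hessenberg recursion~\eqref{CollegeMJ-2002-Cahill-Thm} to the determinantal expression~\eqref{f(n-k)-identity}, identify each leading principal minor with $(-1)^{r-2}F_1^{r-1}(v)G_j^{(r-2)}(v)$, track the signs and powers of $F_1$ exactly as the paper does, and then specialise to $k=j$ and $v=0$. Your closing remark that the same identity drops out of the Leibniz rule applied to $G_j(v)F_1(v)=v^{j-1}$ is a welcome addition not present in the paper's argument.
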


\begin{proof}
In the recursive relation~\eqref{CollegeMJ-2002-Cahill-Thm}, replacing the quantities $k$, $H_{k+1}$, $H_{r-1}$, $h_{i,1}$, and $h_{i,\ell}(v)$ by
\begin{gather*}
k+1, \quad (-1)^kF_1^{k+1}(v)G_j^{(k)}(v), \quad (-1)^{r-2}F_1^{r-1}(v)G_j^{(r-2)}(v), \quad \langle j-1\rangle_{i-1}v^{j-i}
\end{gather*}
for $1\le i\le k+1$, and $\binom{i-1}{\ell-2}F_1^{(i-\ell+1)}(v)$ for $1\le i\le k+1$ and $2\le \ell\le k+1$ in~\eqref{CollegeMJ-2002-Cahill-Thm} respectively arrives at\small
\begin{gather*}
(-1)^kF_1^{k+1}(v)G_j^{(k)}(v)=(-1)^{k}h_{k+1,1} \Biggl(\prod_{\ell=1}^{k}h_{\ell,\ell+1}\Biggr) H_{0}
+\sum_{r=2}^{k+1}(-1)^{k-r+1}h_{k+1,r} \Biggl(\prod_{\ell=r}^{k}h_{\ell,\ell+1}\Biggr) H_{r-1}\\
=(-1)^{k}\langle j-1\rangle_{k}v^{j-k-1} F_1^{k}(v) +\sum_{r=2}^{k+1}(-1)^{k-r+1}\binom{k}{r-2}F_1^{(k-r+2)}(v) F_1^{k-r+1}(v) (-1)^{r-2}F_1^{r-1}(v)G_j^{(r-2)}(v)\\
=(-1)^{k}F_1^{k}(v)\Biggl[\langle j-1\rangle_{k}v^{j-k-1} -\sum_{r=2}^{k+1}\binom{k}{r-2}F_1^{(k-r+2)}(v) G_j^{(r-2)}(v)\Biggr]\\
=(-1)^{k}F_1^{k}(v)\Biggl[\langle j-1\rangle_{k}v^{j-k-1} -\sum_{r=0}^{k-1}\binom{k}{r}F_1^{(k-r)}(v) G_j^{(r)}(v)\Biggr].
\end{gather*}\normalsize
The recursive relation~\eqref{f(j)-Clark-recur-eq} is thus proved.
\par
Taking $k=j$ in~\eqref{f(j)-Clark-recur-eq} and simplifying reduce to~\eqref{f(j)-k=j-Clark-recur-eq}.
\par
Letting $v=0$ in~\eqref{f(j)-Clark-recur-eq} leads to the first equality in~\eqref{G(n)=0-4values} and those equalities in~\eqref{G(n)(k)(0)=0-Eq}.
The proof of Theorem~\ref{f(j)-Clark-recur-thm} is complete.
\end{proof}

\section{A closed-form formula of $\gamma(j,k)$}

In this section, we give a closed-form formula of the coefficients $\gamma(j,k)$ defined in Conjecture~\ref{conjclark2}.

\begin{thm}\label{gamma(n-k)-thm}
For $j\in\mathbb{N}$ and $k\ge0$, the coefficients $\gamma(j,k)$ can be computed by
\begin{equation}
\begin{aligned}\label{gamma(n-k)-Eq}
\gamma(j,k)&=k!\sum_{r=0}^{k}(-1)^{r+1}(r+1)(r+2)(r+j)! \Biggl[\sum_{q=0}^{r+1}\frac{(-1)^{q}}{q+1} \frac{S(r+q+1,q)}{(r-q+1)!(r+q+1)!}\Biggr]\\
&\quad\times\Biggl[\sum_{m=0}^{k-r}(-1)^m\frac{j^{k-r-m}}{(k-r-m)!} \sum_{p=0}^{m}(-1)^p \frac{\langle j+1\rangle_p}{(m+p)!}\sum_{q=0}^p(-1)^{q}\binom{m+p}{p-q}S(m+q,q)\Biggr].
\end{aligned}
\end{equation}
\end{thm}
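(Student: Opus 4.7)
The plan is to compute the Maclaurin coefficient $\gamma(j,k)$ by factoring
\begin{equation*}
g_j(v)=[F_1(v)]^{j+1}\te^{jv}f_j(v),
\end{equation*}
which follows directly from the definition of $g_j$ given in the Introduction together with~\eqref{F-1(v)-Eq}, and then extracting the coefficient of $v^k$ via a Cauchy product of three power series. From
\begin{equation*}
[v^k]g_j(v)=\sum_{r=0}^{k}\bigl([v^r]f_j(v)\bigr)\sum_{m=0}^{k-r}\frac{j^{k-r-m}}{(k-r-m)!}\bigl([v^m][F_1(v)]^{j+1}\bigr)
\end{equation*}
one already reads off the outer $r$-sum, the inner $m$-sum, and the factor $j^{k-r-m}/(k-r-m)!$ arising from the Taylor series of $\te^{jv}$ that appear in~\eqref{gamma(n-k)-Eq}.

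For the first factor $[v^r]f_j(v)$, I would simply quote the power series~\eqref{Maclaurin-f(n)(t)} from Theorem~\ref{exp-t-n-power-lem}; this supplies the prefactor $(-1)^{r+1}(r+1)(r+2)(r+j)!$ together with the first bracket of~\eqref{gamma(n-k)-Eq}. For the remaining factor $[v^m][F_1(v)]^{j+1}$, I would apply the Fa\`a di Bruno formula~\eqref{Bruno-Bell-Polynomial} with outer function $\phi(w)=w^{j+1}$ and inner function $F_1(v)$; since $F_1(0)=1$ by~\eqref{F-1(v)-Eq}, only the falling-factorial piece $\phi^{(p)}(F_1(0))=\langle j+1\rangle_p$ survives. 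Substituting $F_1^{(\ell)}(0)=(-1)^{\ell}/(\ell+1)$ from~\eqref{F(0)-deriv-values} into the resulting Bell polynomials, then invoking the scaling identity~\eqref{Bell(n-k)} with $a=1$, $b=-1$ to factor the alternating signs out as an overall $(-1)^m$, and finally applying the closed form~\eqref{B-S-frac-value} to replace $\bell_{m,p}\bigl(\tfrac12,\tfrac13,\dotsc\bigr)$ by a double sum in $S(m+q,q)$, produces the second bracket of~\eqref{gamma(n-k)-Eq}. The $m!$ generated by~\eqref{B-S-frac-value} cancels the $1/m!$ that converts an $m$th derivative at $0$ into the Taylor coefficient, leaving precisely the factor $1/(m+p)!$ seen in the statement.

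The main technical obstacle is purely bookkeeping: one must track three nested summations and reconcile two distinct sign conventions. In particular, the scaling identity~\eqref{Bell(n-k)} produces $(-1)^{p-q}$ inside the innermost sum whereas~\eqref{gamma(n-k)-Eq} displays $(-1)^q$, but these agree once the global $(-1)^p$ has been extracted, because $(-1)^{p-q}=(-1)^p(-1)^{-q}=(-1)^p(-1)^q$. Similarly, the normalization between $\gamma(j,k)$ and $g_j^{(k)}(0)=k!\,\gamma(j,k)$ needs to be accounted for when passing from Leibniz's rule to the Cauchy product (or equivalently, when passing back). Once these sign and factorial conventions are aligned, the three pieces assemble directly into~\eqref{gamma(n-k)-Eq}.
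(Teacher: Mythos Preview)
Your proposal is correct and follows essentially the same route as the paper's proof. The paper phrases the computation in terms of the Leibniz rule for $g_j^{(k)}(0)=k!\,\gamma(j,k)$, first splitting off $f_j^{(k-\ell)}(0)$ and then $(\te^{jv})^{(\ell-m)}(0)$ and $\bigl(F_1^{j+1}\bigr)^{(m)}(0)$, and subsequently applies~\eqref{G(n-k-0)-value-eq2}, the Fa\`a di Bruno formula~\eqref{Bruno-Bell-Polynomial}, and the Bell-polynomial evaluation~\eqref{bell(0)F(1)-values}; this is precisely your Cauchy product of the three Taylor series $f_j(v)$, $\te^{jv}$, and $[F_1(v)]^{j+1}$ combined with the same ingredients, only written in derivative rather than coefficient notation.
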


\begin{proof}
The equation~\eqref{conjclark-eq} means that
\begin{align*}
k!\gamma(j,k)&=\lim_{t\to0}\frac{\td^kg_j(t)}{\td t^k}
=\lim_{t\to0}\frac{\td^k}{\td t^k}\bigl[F_1^{j+1}(t)\te^{nt}f_j(t)\bigr]\\
&=\lim_{t\to0}\sum_{\ell=0}^{k}\binom{k}{\ell}\bigl[F_1^{j+1}(t)\te^{nt}\bigr]^{(\ell)}f_j^{(k-\ell)}(t)\\
&=\lim_{t\to0}\sum_{\ell=0}^{k}\binom{k}{\ell}\Biggl[\sum_{m=0}^{\ell}\binom{\ell}{m} \bigl(F_1^{j+1}(t)\bigr)^{(m)}j^{\ell-m}\te^{nt}\Biggr] f_j^{(k-\ell)}(t)\\
&=\lim_{t\to0}\sum_{\ell=0}^{k}\binom{k}{\ell} G_j^{(j+k-\ell)}(t) \sum_{m=0}^{\ell}\binom{\ell}{m}j^{\ell-m}\te^{nt}\\
&\quad\times\sum_{p=0}^{m}\langle j+1\rangle_p F_1^{j-p+1}(t)\bell_{m,p}\bigl(F_1'(t), F_1''(t), \dotsc, F_1^{(m-p+1)}(t)\bigr)\\
&=\sum_{\ell=0}^{k}\binom{k}{\ell} G_j^{(j+k-\ell)}(0) \sum_{m=0}^{\ell}\binom{\ell}{m}j^{\ell-m}\sum_{p=0}^{m}\langle j+1\rangle_p F_1^{j-p+1}(0)\\
&\quad\times\bell_{m,p}\bigl(F_1'(0), F_1''(0), \dotsc, F_1^{(m-p+1)}(0)\bigr)\\
&=\sum_{\ell=0}^{k}\binom{k}{\ell} (-1)^{k-\ell+1}(j+k-\ell)!(k-\ell+2)!\\
&\quad\times\Biggl[\sum_{q=0}^{k-\ell+1}\frac{(-1)^{q}}{q+1} \frac{S(k-\ell+q+1,q)}{(k-\ell-q+1)!(k-\ell+q+1)!}\Biggr]\\
&\quad\times\Biggl[\sum_{m=0}^{\ell}\binom{\ell}{m}j^{\ell-m} \sum_{p=0}^{m}(-1)^{m+p}\langle j+1\rangle_p \frac{m!}{(m+p)!}\sum_{q=0}^p(-1)^{q}\binom{m+p}{p-q}S(m+q,q)\Biggr]\\
&=(-1)^{k+1}k!\sum_{\ell=0}^{k}(-1)^\ell (k-\ell+2)(k-\ell+1)(k-\ell+j)!\\
&\quad\times\Biggl[\sum_{q=0}^{k-\ell+1}\frac{(-1)^{q}}{q+1} \frac{S(k-\ell+q+1,q)}{(k-\ell-q+1)!(k-\ell+q+1)!}\Biggr]\\
&\quad\times\Biggl[\sum_{m=0}^{\ell}(-1)^m\frac{j^{\ell-m}}{(\ell-m)!} \sum_{p=0}^{m}(-1)^p \frac{\langle j+1\rangle_p}{(m+p)!}\sum_{q=0}^p(-1)^{q}\binom{m+p}{p-q}S(m+q,q)\Biggr]\\
&=k!\sum_{r=0}^{k}(-1)^{r+1}(r+1)(r+2)(r+j)! \Biggl[\sum_{q=0}^{r+1}\frac{(-1)^{q}}{q+1} \frac{S(r+q+1,q)}{(r-q+1)!(r+q+1)!}\Biggr]\\
&\quad\times\Biggl[\sum_{m=0}^{k-r}(-1)^m\frac{j^{k-r-m}}{(k-r-m)!} \sum_{p=0}^{m}(-1)^p \frac{\langle j+1\rangle_p}{(m+p)!}\sum_{q=0}^p(-1)^{q}\binom{m+p}{p-q}S(m+q,q)\Biggr],
\end{align*}
where we used the Leibniz rule for differentiation of a product, the Fa\`a di Bruno formula~\eqref{Bruno-Bell-Polynomial}, the formula~\eqref{G(n-k-0)-value-eq2} in Theorem~\ref{exp-t-n-power-lem}, and the identity expressed by~\eqref{bell(0)F(1)-values},
Theorem~\ref{gamma(n-k)-thm} is thus proved.
\end{proof}

\section{The power series expansion of $\frak{g}_j(v)$}

In this section, we derive power series expansion of the function $\frak{g}_j(v)$ defined by~\eqref{frak-g(v)-dfn}.

\begin{thm}\label{frak-g(v)-Maclayrin-thm}
For $j\in\mathbb{N}$, the power series of the function $\frak{g}_j(v)$ is
\begin{equation}\label{frak-g(v)-Maclayrin-Eq}
\frak{g}_j(v)=(j+1)! \sum_{k=0}^{\infty}C(j,k)\frac{v^k}{k!},
\end{equation}
where\small
\begin{equation}\label{coeficient-frak(g(t))}
C(j,k)=k!\sum_{r=0}^{k}(-1)^{r+1}(r+1)(r+2)(r+j)! \frac{S(k-r,j+1)}{(k-r)!}\sum_{q=0}^{r+1}\frac{(-1)^{q}}{q+1} \frac{S(r+q+1,q)}{(r-q+1)!(r+q+1)!}.
\end{equation}\normalsize
\end{thm}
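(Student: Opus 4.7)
The plan is to bypass any new differentiation and recognize that the theorem is a direct Cauchy product of two Maclaurin series already in hand. From the definition in~\eqref{frak-g(v)-dfn} and the relation $f_j(v)=(v^j/(1-\te^{-v}))^{(j)}$, we have
\begin{equation*}
\frak{g}_j(v)=(\te^v-1)^{j+1}f_j(v),
\end{equation*}
so the task reduces to multiplying the known power series of $f_j(v)$ by that of $(\te^v-1)^{j+1}$.

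First I would invoke the generating function for the second kind Stirling numbers displayed in Section~2, which after raising to the $(j+1)$st power reads
\begin{equation*}
(\te^v-1)^{j+1}=(j+1)!\sum_{m=j+1}^{\infty}S(m,j+1)\frac{v^m}{m!}
=(j+1)!\sum_{m=0}^{\infty}\frac{S(m,j+1)}{m!}v^m,
\end{equation*}
where the second equality uses $S(m,j+1)=0$ for $m<j+1$. Next I would substitute the Maclaurin series~\eqref{Maclaurin-f(n)(t)} for $f_j(v)$, abbreviating its coefficient of $v^r$ as
\begin{equation*}
a_r(j)=(-1)^{r+1}(r+1)(r+2)(j+r)!\sum_{q=0}^{r+1}\frac{(-1)^q}{q+1}\frac{S(r+q+1,q)}{(r-q+1)!(r+q+1)!},
\end{equation*}
so that $f_j(v)=\sum_{r=0}^{\infty}a_r(j)v^r$.

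Then I would form the Cauchy product of the two absolutely convergent series (both converge in a neighborhood of $0$ since the underlying functions are analytic there) to obtain
\begin{equation*}
\frak{g}_j(v)=(j+1)!\sum_{k=0}^{\infty}\Biggl[\sum_{r=0}^{k}a_r(j)\frac{S(k-r,j+1)}{(k-r)!}\Biggr]v^k
=(j+1)!\sum_{k=0}^{\infty}C(j,k)\frac{v^k}{k!},
\end{equation*}
where multiplying and dividing by $k!$ isolates the factor $C(j,k)$ of~\eqref{frak-g(v)-Maclayrin-Eq}. Plugging the explicit form of $a_r(j)$ back in produces the formula~\eqref{coeficient-frak(g(t))} verbatim.

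There is no real obstacle here; the only thing to watch is index bookkeeping in the convolution (ensuring the $S(k-r,j+1)/(k-r)!$ factor lines up with the outer $k!$ normalization so that $C(j,k)$ collects the correct terms). Since $f_j$ and $(\te^v-1)^{j+1}$ are both entire, the Cauchy product is valid globally, so no separate convergence argument is needed beyond the standard one for products of power series.
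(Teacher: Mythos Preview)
Your argument is correct. The identification $\frak{g}_j(v)=(\te^v-1)^{j+1}f_j(v)$ together with the Stirling generating function and the already established series~\eqref{Maclaurin-f(n)(t)} for $f_j$ gives exactly the convolution you write, and the bookkeeping producing~\eqref{coeficient-frak(g(t))} is right.

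The paper reaches the same endpoint by a slightly heavier route: it computes $\frak{g}_j^{(k)}(0)$ via the Leibniz rule, expands $\bigl[(\te^v-1)^{j+1}\bigr]^{(\ell)}$ with the Fa\`a di Bruno formula~\eqref{Bruno-Bell-Polynomial} and the Bell polynomial identity~\eqref{Bell-stirling}, and then inserts the values $G_j^{(j+k-\ell)}(0)$ from~\eqref{G(n-k-0)-value-eq2}. Your approach short-circuits this machinery by recognizing that the Leibniz expansion at $v=0$ is nothing but the Cauchy product of the two Maclaurin series, and that the Fa\`a di Bruno step is already encoded in the Stirling generating function $(\te^v-1)^{j+1}=(j+1)!\sum_m S(m,j+1)v^m/m!$. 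The gain is brevity and transparency; the cost is that you rely on~\eqref{Maclaurin-f(n)(t)} as a black box, whereas the paper's derivation makes the dependence on the more primitive formula~\eqref{G(n-k-0)-value-eq2} visible. Substantively the two proofs are equivalent.
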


\begin{proof}
It is easy to see that
\begin{align*}
\lim_{v\to0}\frac{\td^k\frak{g}_j(v)}{\td v^k}&=\lim_{v\to0}\bigl[\bigl(\te^v-1\bigr)^{j+1}G_j^{(j)}(v)\bigr]^{(k)}\\
&=\lim_{v\to0}\sum_{\ell=0}^{k}\binom{k}{\ell}\bigl[\bigl(\te^v-1\bigr)^{j+1}\bigr]^{(\ell)}G_j^{(j+k-\ell)}(v)\\
&=\lim_{v\to0}\sum_{\ell=0}^{k}\binom{k}{\ell}\sum_{p=0}^{\ell}\langle j+1\rangle_{p} \bigl(\te^v-1\bigr)^{j-p+1} \bell_{\ell,p}\bigl(\te^v,\te^v,\dotsc,\te^v\bigr) G_j^{(j+k-\ell)}(v)\\
&=\sum_{\ell=0}^{k}\binom{k}{\ell}\langle j+1\rangle_{j+1} \bell_{\ell,j+1}\bigl(1,1,\dotsc,1\bigr) G_j^{(j+k-\ell)}(0)\\
&=(j+1)!k!\sum_{r=0}^{k}(-1)^{r+1}(r+1)(r+2)(r+j)! \frac{S(k-r,j+1)}{(k-r)!} \\
&\quad\times\sum_{q=0}^{r+1}\frac{(-1)^{q}}{q+1} \frac{S(r+q+1,q)}{(r-q+1)!(r+q+1)!},
\end{align*}
where we used the Leibniz rule for differentiation of a product, the Fa\`a di Bruno formula~\eqref{Bruno-Bell-Polynomial}, the identity~\eqref{Bell-stirling}, and the formula~\eqref{G(n-k-0)-value-eq2} in Theorem~\ref{exp-t-n-power-lem}. The power series~\eqref{frak-g(v)-Maclayrin-Eq} is thus proved. The proof of Theorem~\ref{frak-g(v)-Maclayrin-thm} is complete.
\end{proof}

\section{Computation of a Hessenberg determinant}
We call a matrix $H=(h_{\ell,m})_{n\times n}$ a lower or upper Hessenberg matrix if $h_{\ell,m} = 0$ for all pairs $(\ell,m)$ such that $\ell+1<m$ or $h_{\ell,m}=0$ for all pairs $(\ell,m)$ such that $m+1<\ell$. We call the determinant of a lower or upper Hessenberg matrix the Hessenberg determinant.
\par
In this section, utilizing the formulas~\eqref{F(0)-deriv-values} and~\eqref{G(n-k-0)-value-eq2} in Theorem~\ref{exp-t-n-power-lem}, utilizing the determinantal expression~\eqref{f(n-k)-identity} in Theorem~\ref{f(n-k)-id-Thm}, and employing some basic properties of determinants, we find the following formula for computing a Hessenberg determinant and supply an alternative proof.

\begin{thm}\label{Hessenberg-comput-thm}
For $j\in\{0\}\cup\mathbb{N}$, we have
\begin{equation}\label{Hessenberg-comput-thm-EQ}
\begin{vmatrix}
\frac{1}{2!} & 1 & 0 & \dotsm & 0 & 0 & 0\\
\frac{1}{3!} & \frac{1}{2!} & 1 & \dotsm & 0 & 0 & 0\\
\frac{1}{4!}& \frac{1}{3!} & \frac{1}{2!} & \dotsm & 0 & 0 & 0\\
\vdots & \vdots & \vdots & \ddots & \vdots & \vdots & \vdots\\
\frac{1}{j!} & \frac{1}{(j-1)!} & \frac{1}{(j-2)!} & \dotsm & \frac{1}{2!} &  1 & 0\\
\frac{1}{(j+1)!} & \frac{1}{j!} & \frac{1}{(j-1)!} & \dotsm & \frac{1}{3!} & \frac{1}{2!} & 1 \\
\frac{1}{(j+2)!} & \frac{1}{(j+1)!} & \frac{1}{j!} & \dotsm & \frac{1}{4!} & \frac{1}{3!} & \frac{1}{2!}
\end{vmatrix}
=(-1)^{j+1}\frac{B_{j+1}}{(j+1)!},
\end{equation}
where $B_j$ denotes the Bernoulli numbers which can be generated by
\begin{equation}\label{Bernoullu=No-dfn-Eq}
\frac{v}{\te^v-1}=\sum_{j=0}^\infty B_j\frac{v^j}{j!}
=1-\frac{v}2+\sum_{j=1}^\infty B_{2j}\frac{v^{2j}}{(2j)!}, \quad \vert v\vert<2\pi.
\end{equation}
\end{thm}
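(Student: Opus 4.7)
The plan is to specialize the determinantal formula~\eqref{f(n-k)-identity} from Theorem~\ref{f(n-k)-id-Thm} by setting its $j$ to $1$ and its $k$ to $j+1$ (where here $j$ denotes the index of the present theorem), then to evaluate at $v=0$ and compare against the Maclaurin expansion of $G_1$ in Bernoulli numbers.

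Substituting $v\mapsto -v$ in~\eqref{Bernoullu=No-dfn-Eq} yields $G_1(v)=\frac{v}{1-\te^{-v}}=\sum_{m=0}^\infty(-1)^m B_m\frac{v^m}{m!}$, so $G_1^{(j+1)}(0)=(-1)^{j+1}B_{j+1}$. On the other hand, in the right-hand side of~\eqref{f(n-k)-identity} with $j=1$, one has $\langle 0\rangle_\ell=0$ for $\ell\ge 1$, so the first column of the $(j+2)\times(j+2)$ matrix $W(0)$ collapses to $(1,0,\dotsc,0)^\top$. Expanding along this column and using $F_1(0)=1$ together with $F_1^{(m)}(0)=(-1)^m/(m+1)$ from~\eqref{F(0)-deriv-values} produces the $(j+1)\times(j+1)$ minor $V'$ with entries
$$
V'_{i,\ell}=\binom{i}{\ell-1}\frac{(-1)^{i-\ell+1}}{i-\ell+2}=\frac{i!}{(\ell-1)!}\cdot\frac{(-1)^{i-\ell+1}}{(i-\ell+2)!}
$$
for $i\ge\ell-1$ and $0$ otherwise. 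Comparing the two expressions for $G_1^{(j+1)}(0)$ gives $|V'|=|W(0)|=B_{j+1}$.

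It remains to reduce $V'$ to the Hessenberg matrix $H_j$ in~\eqref{Hessenberg-comput-thm-EQ} by pulling $i!$ out of row $i$, $1/(\ell-1)!$ out of column $\ell$, and the sign $(-1)^{i-\ell+1}=(-1)^{i+1}(-1)^\ell$ out via further diagonal multiplications on rows and columns. The accumulated scalar is
$$
\frac{\prod_{i=1}^{j+1}i!}{\prod_{\ell=1}^{j+1}(\ell-1)!}\cdot\prod_{i=1}^{j+1}(-1)^{i+1}\cdot\prod_{\ell=1}^{j+1}(-1)^\ell=(j+1)!\cdot(-1)^{j+1},
$$
giving $B_{j+1}=|V'|=(-1)^{j+1}(j+1)!\,|H_j|$, which rearranges to~\eqref{Hessenberg-comput-thm-EQ}.

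The main obstacle will be the careful bookkeeping in the last step: verifying that the telescoping factorial ratio $\prod_{i=1}^{j+1}i!\big/\prod_{\ell=1}^{j+1}(\ell-1)!$ collapses cleanly to $(j+1)!$, and that the two triangular sums $\sum_{i=1}^{j+1}(i+1)$ and $\sum_{\ell=1}^{j+1}\ell$ (each essentially $\binom{j+2}{2}$) combine to total parity $j+1$, producing the overall sign $(-1)^{j+1}$ that matches the claimed formula.
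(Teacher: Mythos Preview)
Your argument is correct, and the bookkeeping you flag as ``the main obstacle'' works out exactly as you indicate: the factorial ratio telescopes to $(j+1)!$ and the total sign exponent $(j+1)(j+3)\equiv j+1\pmod 2$.

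Your route is closely related to, but cleaner than, the paper's second proof of this theorem. The paper also starts from Theorem~\ref{f(n-k)-id-Thm}, but keeps a general index $j$ and general $k>j-1$; this forces a $2\times2$ block decomposition of the determinant (the nonzero entry $(j-1)!$ sits in row $j$ of the first column, so one gets an upper-left triangular block $A_{(j-1)\times(j-1)}$ and a lower-right Hessenberg block), and then the paper evaluates $G_j^{(k)}(0)$ via the Stirling-number formula~\eqref{G(n-k-0)-value-eq2} and invokes the Bernoulli--Stirling identity~\eqref{Bernoulli-Stirling-formula} to close the loop. By specializing to $j=1$ at the outset you make the upper-left block empty and read off $G_1^{(j+1)}(0)=(-1)^{j+1}B_{j+1}$ directly from the generating function~\eqref{Bernoullu=No-dfn-Eq}, so neither the block manipulation nor the Stirling identity is needed. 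The paper's \emph{first} proof is different again: it quotes the Wronski reciprocal-series formula~\eqref{f(v)g(v)=1=determ} as a black box and applies it to $f(v)=(e^v-1)/v$, which is shorter still but imports an external lemma. Your version stays within the paper's own toolkit (Theorem~\ref{f(n-k)-id-Thm} and~\eqref{F(0)-deriv-values}) while avoiding the Stirling detour, so it is arguably the most self-contained of the three.
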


\begin{proof}[First proof]
Let $f(v)=1+\sum_{k=1}^\infty a_k v^k$ and $g(v)=1+\sum_{k=1}^\infty b_k v^k$ be two formal power series such that $f(v)g(v)=1$. Then
\begin{equation}\label{f(v)g(v)=1=determ}
b_j=(-1)^j
\begin{vmatrix}
a_1 & 1 & 0 & 0 & \dotsm & 0\\
a_2 & a_1 & 1 & 0 & \dotsm & 0\\
a_3 & a_2 & a_1 & 1 & \dotsm & 0\\
\vdots & \vdots & \vdots & \vdots & \ddots & \vdots\\
a_{j-1} & a_{j-2} & a_{j-3} & a_{j-4} & \dotsm & 1\\
a_{j} & a_{j-1} & a_{j-2} & a_{j-3} & \dotsm & a_1
\end{vmatrix}.
\end{equation}
The formula~\eqref{f(v)g(v)=1=determ} can be found in either~\cite[p.~17, Theorem~1.3]{Henrici-B-1974}, or~\cite[p.~347]{Inselberg-JMAA-1978}, or~\cite[Lemma~2.4]{2Closed-Bern-Polyn2.tex}. Letting $a_k=\frac{1}{(k+1)!}$ yields
\begin{equation*}
f(v)=1+\sum_{k=1}^\infty\frac{v^k}{(k+1)!}
=\frac{\te^v-1}{v}
\quad\text{and}\quad
g(v)=\frac{1}{f(v)}=\frac{v}{\te^v-1}
=1+\sum_{k=1}^\infty B_k\frac{v^k}{k!}.
\end{equation*}
This means that $b_k=\frac{B_k}{k!}$. Making use of these concrete values of $a_k$ and $b_k$ in~\eqref{f(v)g(v)=1=determ} arrives at~\eqref{Hessenberg-comput-thm-EQ}. The first proof of Theorem~\ref{Hessenberg-comput-thm} is complete.
\end{proof}

\begin{proof}[Second proof]
When $k>j-1$ and $v=0$, the determinantal expression~\eqref{f(n-k)-identity} in Theorem~\ref{f(n-k)-id-Thm} can be rearranged as
\begin{align*}
G_j^{(k)}(0)&=(-1)^k
\begin{vmatrix}
0 & F_1(0) & 0 & \dotsm & 0& 0\\
0 & F_1'(0) & F_1(0) & \dotsm & 0& 0\\
0 & F_1''(0) & \binom{2}1F_1'(0) & \dotsm & 0& 0\\
\vdots & \vdots & \vdots & \ddots & \vdots & \vdots\\
0 & F_1^{(j-2)}(0) & \binom{j-2}1F_1^{(j-3)}(0) & \dotsm & 0 & 0\\
\langle j-1\rangle_{j-1} & F_1^{(j-1)}(0) & \binom{j-1}1F_1^{(j-2)}(0) & \dotsm & 0 & 0\\
0 & F_1^{(j)}(0) & \binom{j}1F_1^{(j-1)}(0) & \dotsm & 0 & 0\\
\vdots & \vdots & \vdots & \ddots & \vdots & \vdots\\
0 & F_1^{(k-2)}(0) & \binom{k-2}1F_1^{(k-3)}(0) &  \dotsm & F_1(0) & 0\\
0 & F_1^{(k-1)}(0) & \binom{k-1}1F_1^{(k-2)}(0) &  \dotsm & \binom{k-1}{k-2}F_1'(0) & F_1(0)\\
0 & F_1^{(k)}(0) & \binom{k}1F_1^{(k-1)}(0) & \dotsm & \binom{k}{k-2}F_1''(0) & \binom{k}{k-1}F_1'(0)
\end{vmatrix}\\
&=(-1)^{k+j+1}\langle j-1\rangle_{j-1}
\begin{vmatrix}
A_{(j-1)\times(j-1)} & O_{(j-1)\times(k-j+1)}\\
C_{(k-j+1)\times(j-1)}& B_{(k-j+1)\times(k-j+1)}
\end{vmatrix},
\end{align*}
where, by suitable operations in linear algebra,
\begin{gather*}
\begin{aligned}
\bigl|A_{(j-1)\times(j-1)}\bigr|&=
\begin{vmatrix}
F_1(0) & 0 & 0 & \dotsm & 0 & 0\\
F_1'(0) & F_1(0) & 0 & \dotsm & 0 & 0\\
F_1''(0) & \binom{2}1F_1'(0) & F_1(0) & \dotsm & 0& 0\\
\vdots & \vdots & \vdots & \ddots & \vdots & \vdots\\
F_1^{(j-4)}(0) & \binom{j-4}1F_1^{(j-5)}(0) & \binom{j-4}2F_1^{(j-6)}(0) & \dotsm & 0 & 0\\
F_1^{(j-3)}(0) & \binom{j-3}1F_1^{(j-4)}(0) & \binom{j-3}2F_1^{(j-5)}(0) & \dotsm & F_1(0) & 0 \\
F_1^{(j-2)}(0) & \binom{j-2}1F_1^{(j-3)}(0) & \binom{j-2}2F_1^{(j-4)}(0) & \dotsm & \binom{j-2}{j-3}F_1'(0) & F_1(0)
\end{vmatrix}\\
&=F_1^{j-1}(0)\\
&=1,
\end{aligned}\\
\bigl|B_{(k-j+1)\times(k-j+1)}\bigr|\\
=\begin{vmatrix}
\binom{j}{j-1}F_1'(0) & F_1(0) & 0 & \dotsm & 0 & 0\\
\binom{j+1}{j-1}F_1''(0) & \binom{j+1}{j}F_1'(0) & F_1(0) & \dotsm & 0 & 0\\
\binom{j+2}{j-1}F_1^{(3)}(0) & \binom{j+2}{j}F_1''(0) & \binom{j+2}{j+1}F_1'(0) & \dotsm & 0 & 0\\
\vdots & \vdots & \vdots & \ddots & \vdots & \vdots \\
\binom{k-2}{j-1}F_1^{(k-j-1)}(0) & \binom{k-2}{j}F_1^{(k-j-2)}(0) & \binom{k-2}{j+1}F_1^{(k-j-3)}(0) & \dotsm &  F_1(0) & 0\\
\binom{k-1}{j-1}F_1^{(k-j)}(0) & \binom{k-1}{j}F_1^{(k-j-1)}(0) & \binom{k-1}{j+1}F_1^{(k-j-2)}(0) & \dotsm & \binom{k-1}{k-2}F_1'(0) & F_1(0) \\
\binom{k}{j-1}F_1^{(k-j+1)}(0) & \binom{k}{j}F_1^{(k-j)}(0) & \binom{k}{j+1}F_1^{(k-j-1)}(0) & \dotsm & \binom{k}{k-2}F_1''(0) & \binom{k}{k-1}F_1'(0)
\end{vmatrix}\\
=\begin{vmatrix}
\frac{-j!}{(j-1)!2!} & 1 & 0 & \dotsm & 0 & 0 & 0\\
\frac{(j+1)!}{(j-1)!3!} & \frac{-(j+1)!}{j!2!} & 1 & \dotsm & 0 & 0 & 0\\
\frac{-(j+2)!}{(j-1)!4!}& \frac{(j+2)!}{j!3!} & \frac{-(j+2)!}{(j+1)!2!} & \dotsm & 0 & 0 & 0\\
\vdots & \vdots & \vdots & \ddots & \vdots & \vdots & \vdots\\
\frac{(-1)^{k-j-1}(k-2)!}{(j-1)!(k-j)!} & \frac{(-1)^{k-j-2}(k-2)!}{j!(k-j-1)!} & \frac{(-1)^{k-j-3}(k-2)!}{(j+1)!(k-j-2)!} & \dotsm & \frac{-(k-2)!}{(k-3)!2!} &  1 & 0\\
\frac{(-1)^{k-j}(k-1)!}{(j-1)!(k-j+1)!} & \frac{(-1)^{k-j-1}(k-1)!}{j!(k-j)!} & \frac{(-1)^{k-j-2}(k-1)!}{(j+1)!(k-j-1)!} & \dotsm & \frac{(k-1)!}{(k-3)!3!} & \frac{-(k-1)!}{(k-2)!2!} & 1 \\
\frac{(-1)^{k-j+1}k!}{(j-1)!(k-j+2)!} & \frac{(-1)^{k-j}k!}{j!(k-j+1)!} & \frac{(-1)^{k-j-1}k!}{(j+1)!(k-j)!} & \dotsm & \frac{-k!}{(k-3)!4!} & \frac{k!}{(k-2)!3!} & \frac{-k!}{(k-1)!2!}
\end{vmatrix}\\
=\frac{k!}{(j-1)!}
\begin{vmatrix}
\frac{-1}{2!} & 1 & 0 & \dotsm & 0 & 0 & 0\\
\frac{1}{3!} & \frac{-1}{2!} & 1 & \dotsm & 0 & 0 & 0\\
\frac{-1}{4!}& \frac{1}{3!} & \frac{-1}{2!} & \dotsm & 0 & 0 & 0\\
\vdots & \vdots & \vdots & \ddots & \vdots & \vdots & \vdots\\
\frac{(-1)^{k-j-1}}{((k-j)!} & \frac{(-1)^{k-j-2}}{(k-j-1)!} & \frac{(-1)^{k-j-3}}{(k-j-2)!} & \dotsm & \frac{-1}{2!} &  1 & 0\\
\frac{(-1)^{k-j}}{(k-j+1)!} & \frac{(-1)^{k-j-1}}{(k-j)!} & \frac{(-1)^{k-j-2}}{(k-j-1)!} & \dotsm & \frac{1}{3!} & \frac{-1}{2!} & 1 \\
\frac{(-1)^{k-j+1}}{(k-j+2)!} & \frac{(-1)^{k-j}}{(k-j+1)!} & \frac{(-1)^{k-j-1}}{(k-j)!} & \dotsm & \frac{-1}{4!} & \frac{1}{3!} & \frac{-1}{2!}
\end{vmatrix},
\end{gather*}
and $O_{(j-1)\times(k-j+1)}$ is a matrix whose elements are all zero,
where we used the formula~\eqref{F(0)-deriv-values} in Theorem~\ref{exp-t-n-power-lem}.
Employing the formula
\begin{equation*}
\begin{vmatrix}A & O\\ C & B\end{vmatrix}=|A||B|
\end{equation*}
for square matrices $A$ and $B$, see~\cite[p.~103, eq.~(2.7.6)]{Bernstein=2018-MatrixMath}, we obtain
\begin{align*}
&\quad\frac{k!}{(j-1)!}
\begin{vmatrix}
\frac{-1}{2!} & 1 & 0 & \dotsm & 0 & 0 & 0\\
\frac{1}{3!} & \frac{-1}{2!} & 1 & \dotsm & 0 & 0 & 0\\
\frac{-1}{4!}& \frac{1}{3!} & \frac{-1}{2!} & \dotsm & 0 & 0 & 0\\
\vdots & \vdots & \vdots & \ddots & \vdots & \vdots & \vdots\\
\frac{(-1)^{k-j-1}}{((k-j)!} & \frac{(-1)^{k-j-2}}{(k-j-1)!} & \frac{(-1)^{k-j-3}}{(k-j-2)!} & \dotsm & \frac{-1}{2!} &  1 & 0\\
\frac{(-1)^{k-j}}{(k-j+1)!} & \frac{(-1)^{k-j-1}}{(k-j)!} & \frac{(-1)^{k-j-2}}{(k-j-1)!} & \dotsm & \frac{1}{3!} & \frac{-1}{2!} & 1 \\
\frac{(-1)^{k-j+1}}{(k-j+2)!} & \frac{(-1)^{k-j}}{(k-j+1)!} & \frac{(-1)^{k-j-1}}{(k-j)!} & \dotsm & \frac{-1}{4!} & \frac{1}{3!} & \frac{-1}{2!}
\end{vmatrix}\\
&=(-1)^{k+j+1}\frac{G_j^{(k)}(0)}{\langle j-1\rangle_{j-1}}\\
&=\frac{k!(k-j+2)!}{(j-1)!}\sum_{q=0}^{k-j+1}\frac{(-1)^{q}}{q+1} \frac{S(k-j+q+1,q)}{(k-j-q+1)!(k-j+q+1)!},
\end{align*}
where we used the formula~\eqref{G(n-k-0)-value-eq2} in Theorem~\ref{exp-t-n-power-lem}.
Consequently, it follows that
\begin{gather*}
\begin{vmatrix}
\frac{-1}{2!} & 1 & 0 & \dotsm & 0 & 0 & 0\\
\frac{1}{3!} & \frac{-1}{2!} & 1 & \dotsm & 0 & 0 & 0\\
\frac{-1}{4!}& \frac{1}{3!} & \frac{-1}{2!} & \dotsm & 0 & 0 & 0\\
\vdots & \vdots & \vdots & \ddots & \vdots & \vdots & \vdots\\
\frac{(-1)^{k-j-1}}{((k-j)!} & \frac{(-1)^{k-j-2}}{(k-j-1)!} & \frac{(-1)^{k-j-3}}{(k-j-2)!} & \dotsm & \frac{-1}{2!} &  1 & 0\\
\frac{(-1)^{k-j}}{(k-j+1)!} & \frac{(-1)^{k-j-1}}{(k-j)!} & \frac{(-1)^{k-j-2}}{(k-j-1)!} & \dotsm & \frac{1}{3!} & \frac{-1}{2!} & 1 \\
\frac{(-1)^{k-j+1}}{(k-j+2)!} & \frac{(-1)^{k-j}}{(k-j+1)!} & \frac{(-1)^{k-j-1}}{(k-j)!} & \dotsm & \frac{-1}{4!} & \frac{1}{3!} & \frac{-1}{2!}
\end{vmatrix}\\
=(k-j+2)!\sum_{q=0}^{k-j+1}\frac{(-1)^{q}}{q+1} \frac{S(k-j+q+1,q)}{(k-j-q+1)!(k-j+q+1)!}.
\end{gather*}
Further replacing $k-j$ by $\ell$ and utilizing related properties in linear algebra yield
\begin{equation}
\begin{aligned}\label{Hessenberg-comput-EQ}
&\quad\begin{vmatrix}
\frac{-1}{2!} & 1 & 0 & \dotsm & 0 & 0 & 0\\
\frac{1}{3!} & \frac{-1}{2!} & 1 & \dotsm & 0 & 0 & 0\\
\frac{-1}{4!}& \frac{1}{3!} & \frac{-1}{2!} & \dotsm & 0 & 0 & 0\\
\vdots & \vdots & \vdots & \ddots & \vdots & \vdots & \vdots\\
\frac{(-1)^{\ell-1}}{\ell!} & \frac{(-1)^{\ell-2}}{(\ell-1)!} & \frac{(-1)^{\ell-3}}{(\ell-2)!} & \dotsm & \frac{-1}{2!} &  1 & 0\\
\frac{(-1)^{\ell}}{(\ell+1)!} & \frac{(-1)^{\ell-1}}{\ell!} & \frac{(-1)^{\ell-2}}{(\ell-1)!} & \dotsm & \frac{1}{3!} & \frac{-1}{2!} & 1 \\
\frac{(-1)^{\ell+1}}{(\ell+2)!} & \frac{(-1)^{\ell}}{(\ell+1)!} & \frac{(-1)^{\ell-1}}{\ell!} & \dotsm & \frac{-1}{4!} & \frac{1}{3!} & \frac{-1}{2!}
\end{vmatrix}\\
&=(-1)^{\ell+1}
\begin{vmatrix}
\frac{1}{2!} & 1 & 0 & \dotsm & 0 & 0 & 0\\
\frac{1}{3!} & \frac{1}{2!} & 1 & \dotsm & 0 & 0 & 0\\
\frac{1}{4!}& \frac{1}{3!} & \frac{1}{2!} & \dotsm & 0 & 0 & 0\\
\vdots & \vdots & \vdots & \ddots & \vdots & \vdots & \vdots\\
\frac{1}{\ell!} & \frac{1}{(\ell-1)!} & \frac{1}{(\ell-2)!} & \dotsm & \frac{1}{2!} &  1 & 0\\
\frac{1}{(\ell+1)!} & \frac{1}{\ell!} & \frac{1}{(\ell-1)!} & \dotsm & \frac{1}{3!} & \frac{1}{2!} & 1 \\
\frac{1}{(\ell+2)!} & \frac{1}{(\ell+1)!} & \frac{1}{\ell!} & \dotsm & \frac{1}{4!} & \frac{1}{3!} & \frac{1}{2!}
\end{vmatrix}\\
&=(\ell+2)!\sum_{q=0}^{\ell+1}\frac{(-1)^{q}}{q+1} \frac{S(\ell+q+1,q)}{(\ell-q+1)!(\ell+q+1)!}.
\end{aligned}
\end{equation}
In~\cite[Theorem~1]{Guo-Qi-JANT-Bernoulli.tex}, among other things, it was obtained that
\begin{equation}\label{Bernoulli-Stirling-formula}
B_j=\sum_{i=0}^j(-1)^{i}\frac{\binom{j+1}{i+1}}{\binom{j+i}{i}}S(j+i,i), \quad j\ge0.
\end{equation}
The formula~\eqref{Bernoulli-Stirling-formula} can be rearranged as
\begin{equation}\label{Berlli-Stng-forla}
\begin{aligned}
B_{j+1}&=\sum_{q=0}^{j+1}(-1)^{q}\frac{\binom{j+2}{q+1}}{\binom{j+q+1}{q}}S(j+q+1,q)\\
&=\sum_{q=0}^{j+1}(-1)^{q}\frac{(j+2)!}{(q+1)!(j-q+1)!}\frac{q!(j+1)!}{(j+q+1)!}S(j+q+1,q)\\
&=(j+1)!(j+2)!\sum_{q=0}^{j+1}\frac{(-1)^{q}}{q+1}\frac{S(j+q+1,q)}{(j-q+1)!(j+q+1)!}
\end{aligned}
\end{equation}
for $j\ge0$. Substituting~\eqref{Berlli-Stng-forla} into~\eqref{Hessenberg-comput-EQ} results in the formula~\eqref{Hessenberg-comput-thm-EQ}.
The second proof of Theorem~\ref{Hessenberg-comput-thm} is complete.
\end{proof}

\section{Coefficients in power series in terms of Bernoulli numbers}
In this section, we derive coefficients in power series of the functions $f_j(v)$, $g_j(v)$, and $\frak{g}_j(v)$ in terms of the Bernoulli numbers $B_j$ generated by~\eqref{Bernoullu=No-dfn-Eq}.

\begin{thm}\label{G(n)(0)(k)-Berni-cor}
For $k\ge0$ and $j\in\mathbb{N}$, we have
\begin{equation}\label{G(n)(0)(k)-Berni-cor-Eq}
G_j^{(k)}(0)=
\begin{dcases}
0, & 0\le k<j-1;\\
k!\frac{(-1)^{k-j+1}B_{k-j+1}}{(k-j+1)!}, & k\ge j-1\ge0.
\end{dcases}
\end{equation}
\end{thm}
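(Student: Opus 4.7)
The plan is to reduce the statement to a direct power-series comparison, exploiting the fact that $G_j(v)=v^{j-1}\cdot\frac{v}{1-\te^{-v}}$ and that the second factor is, up to the substitution $v\mapsto -v$, precisely the classical Bernoulli generating function appearing in~\eqref{Bernoullu=No-dfn-Eq}.

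First, I would obtain the relevant expansion. Replacing $v$ by $-v$ in~\eqref{Bernoullu=No-dfn-Eq} yields
\begin{equation*}
\frac{v}{1-\te^{-v}}=\sum_{m=0}^{\infty}(-1)^m B_m \frac{v^m}{m!},\quad |v|<2\pi,
\end{equation*}
so that multiplying by $v^{j-1}$ produces the Maclaurin series
\begin{equation*}
G_j(v)=\sum_{m=0}^{\infty}(-1)^m B_m \frac{v^{m+j-1}}{m!}.
\end{equation*}
Matching the coefficient of $v^k$ with the Taylor coefficient $G_j^{(k)}(0)/k!$ now immediately gives $G_j^{(k)}(0)=0$ whenever $k<j-1$ (no index $m\ge 0$ yields the exponent $k$), while for $k\ge j-1$ only $m=k-j+1$ contributes and produces exactly~\eqref{G(n)(0)(k)-Berni-cor-Eq}.

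There is essentially no obstacle beyond careful index bookkeeping. As a consistency check that also keeps the argument inside the paper's own machinery, one may combine the Stirling-number formula~\eqref{G(n-k-0)-value-eq2} of Theorem~\ref{exp-t-n-power-lem} with the rearranged Bernoulli--Stirling identity~\eqref{Berlli-Stng-forla} established in the second proof of Theorem~\ref{Hessenberg-comput-thm}: substituting the free parameter in~\eqref{Berlli-Stng-forla} by $k-j$ collapses the inner $q$-sum in~\eqref{G(n-k-0)-value-eq2} to $B_{k-j+1}/\bigl[(k-j+1)!(k-j+2)!\bigr]$, and the factor $(k-j+2)!$ in the outer prefactor cancels, reproducing the same closed form and thereby linking the Bell-polynomial approach of Section~3 to the Bernoulli representation. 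The boundary cases $k=j-1,j,j+1,j+2$ are worth checking at the end, since they must reproduce $B_0=1$, $B_1=-\frac12$, $B_2=\frac16$, and $B_3=0$ in agreement with~\eqref{G(n)=0-4values} and~\eqref{G(n)=0-4values-later2}.
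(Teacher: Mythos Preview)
Your proposal is correct and coincides with the paper's own arguments: your main power-series comparison is exactly the paper's ``Second proof'' of this theorem, and your consistency check via~\eqref{G(n-k-0)-value-eq2} and~\eqref{Berlli-Stng-forla} is precisely the paper's ``Third proof.'' The paper additionally offers a ``First proof'' via the Hessenberg-determinant route of Theorem~\ref{Hessenberg-comput-thm}, which you do not use, but your two approaches already match two of the three given.
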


\begin{proof}[First proof]
From the first proof of Theorem~\ref{Hessenberg-comput-thm}, we obtain
\begin{align*}
G_j^{(k)}(0)&=(-1)^{k+j+1}k!
\begin{vmatrix}
\frac{-1}{2!} & 1 & 0 & \dotsm & 0 & 0 & 0\\
\frac{1}{3!} & \frac{-1}{2!} & 1 & \dotsm & 0 & 0 & 0\\
\frac{-1}{4!}& \frac{1}{3!} & \frac{-1}{2!} & \dotsm & 0 & 0 & 0\\
\vdots & \vdots & \vdots & \ddots & \vdots & \vdots & \vdots\\
\frac{(-1)^{k-j-1}}{((k-j)!} & \frac{(-1)^{k-j-2}}{(k-j-1)!} & \frac{(-1)^{k-j-3}}{(k-j-2)!} & \dotsm & \frac{-1}{2!} &  1 & 0\\
\frac{(-1)^{k-j}}{(k-j+1)!} & \frac{(-1)^{k-j-1}}{(k-j)!} & \frac{(-1)^{k-j-2}}{(k-j-1)!} & \dotsm & \frac{1}{3!} & \frac{-1}{2!} & 1 \\
\frac{(-1)^{k-j+1}}{(k-j+2)!} & \frac{(-1)^{k-j}}{(k-j+1)!} & \frac{(-1)^{k-j-1}}{(k-j)!} & \dotsm & \frac{-1}{4!} & \frac{1}{3!} & \frac{-1}{2!}
\end{vmatrix}\\
&=k!
\begin{vmatrix}
\frac1{2!} & 1 & 0 & \dotsm & 0 & 0 & 0\\
\frac{1}{3!} & \frac1{2!} & 1 & \dotsm & 0 & 0 & 0\\
\frac1{4!}& \frac{1}{3!} & \frac1{2!} & \dotsm & 0 & 0 & 0\\
\vdots & \vdots & \vdots & \ddots & \vdots & \vdots & \vdots\\
\frac1{((k-j)!} & \frac1{(k-j-1)!} & \frac1{(k-j-2)!} & \dotsm & \frac1{2!} &  1 & 0\\
\frac1{(k-j+1)!} & \frac1{(k-j)!} & \frac1{(k-j-1)!} & \dotsm & \frac{1}{3!} & \frac1{2!} & 1 \\
\frac1{(k-j+2)!} & \frac1{(k-j+1)!} & \frac1{(k-j)!} & \dotsm & \frac1{4!} & \frac{1}{3!} & \frac1{2!}
\end{vmatrix}\\
&=k!\frac{(-1)^{k-j+1}B_{k-j+1}}{(k-j+1)!}.
\end{align*}
The first proof of Theorem~\ref{G(n)(0)(k)-Berni-cor} is complete.
\end{proof}

\begin{proof}[Second proof]
For $j\in\mathbb{N}$, the function $G_j(v)$ can be rewritten as
\begin{equation*}
G_j(v)=v^{j-1}\frac{-v}{\te^{-v}-1}
=v^{j-1}\sum_{\ell=0}^{\infty}\frac{B_\ell}{\ell!}(-v)^\ell
=\sum_{\ell=0}^{\infty}(-1)^\ell\frac{B_\ell}{\ell!}v^{j+\ell-1}, \quad |v|<2\pi.
\end{equation*}
Hence, it follows that
\begin{equation*}
G_j^{(k)}(v)=\sum_{\ell=0}^{\infty}(-1)^\ell\frac{B_\ell}{\ell!}\langle j+\ell-1\rangle_kt^{j+\ell-k-1}, \quad |v|<2\pi.
\end{equation*}
Therefore, we arrive at
\begin{equation*}
G_j^{(k)}(0)=
\begin{dcases}
0, & 0\le k<j-1;\\
(-1)^{k-j+1}\frac{B_{k-j+1}}{(k-j+1)!}\langle k\rangle_k, & k\ge j-1\ge0.
\end{dcases}
\end{equation*}
The second proof of Theorem~\ref{G(n)(0)(k)-Berni-cor} is complete.
\end{proof}

\begin{proof}[Third proof]
This follows from combining~\eqref{G(n-k-0)-value-eq2} and~\eqref{Berlli-Stng-forla}. The third proof of Theorem~\ref{G(n)(0)(k)-Berni-cor} is complete.
\end{proof}

\begin{thm}\label{Maclaurin-f(n)(t)-Bern-thm}
For $j\in\mathbb{N}$ and $k\ge0$, we have
\begin{align}\label{Maclaurin-f(n)(t)-Bernoulli}
\frac{f_j(v)}{j!}&=\frac{1}{2}+\sum_{r=1}^{\infty} \binom{j+2r-1}{j} \frac{B_{2r}}{(2r)!} v^{2r-1},\\
\begin{split}\label{gamma(n-k)-Brnn-Eq}
\frac{\gamma(j,k)}{j!k!}&=\sum_{r=0}^{k}\binom{j+r}{j} \frac{(-1)^{r+1}B_{r+1}}{(r+1)!} \Biggl[\sum_{m=0}^{k-r}(-1)^m\frac{j^{k-r-m}}{(k-r-m)!}\\
&\quad\times\sum_{p=0}^{m}(-1)^p \frac{\langle j+1\rangle_p}{(m+p)!}\sum_{q=0}^p(-1)^{q}\binom{m+p}{p-q}S(m+q,q)\Biggr],
\end{split}
\end{align}
and
\begin{equation}\label{coef-Bernoul-frak(g(t))}
\frac{C(j,k)}{j!k!}=\sum_{r=0}^{k}\binom{j+r}{j} \frac{(-1)^{r+1}B_{r+1}}{(r+1)!}\frac{S(k-r,j+1)}{(k-r)!}.
\end{equation}
\end{thm}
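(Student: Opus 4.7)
The plan is to regard all three identities as corollaries of previously established results, with the unifying observation that the Stirling number sum
\[
(r+1)(r+2)\sum_{q=0}^{r+1}\frac{(-1)^q}{q+1}\frac{S(r+q+1,q)}{(r-q+1)!(r+q+1)!}=\frac{B_{r+1}}{r!\,(r+1)!}
\]
which is exactly the rearranged identity~\eqref{Berlli-Stng-forla}, is precisely the expression appearing inside~\eqref{Maclaurin-f(n)(t)}, \eqref{gamma(n-k)-Eq}, and~\eqref{coeficient-frak(g(t))}. Equivalently, this is Theorem~\ref{G(n)(0)(k)-Berni-cor} reformulated: the Stirling-number formula~\eqref{G(n-k-0)-value-eq2} and the Bernoulli-number formula~\eqref{G(n)(0)(k)-Berni-cor-Eq} must agree.

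For the first identity~\eqref{Maclaurin-f(n)(t)-Bernoulli}, I would expand $f_j(v)=G_j^{(j)}(v)$ as a Maclaurin series. Since $f_j^{(i)}(0)=G_j^{(j+i)}(0)$ and $j+i\ge j-1$ for every $i\ge0$, Theorem~\ref{G(n)(0)(k)-Berni-cor} gives
\[
\frac{f_j^{(i)}(0)}{j!\,i!}=\binom{j+i}{j}\frac{(-1)^{i+1}B_{i+1}}{(i+1)!}.
\]
The $i=0$ term yields the constant $1/2$ via $B_1=-\tfrac12$. For $i\ge1$ we have $i+1\ge2$ and $B_{i+1}=0$ unless $i+1$ is even; writing $i=2r-1$ kills the sign $(-1)^{i+1}=(-1)^{2r}=1$ and reindexes the surviving terms precisely as in~\eqref{Maclaurin-f(n)(t)-Bernoulli}.

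For the second identity~\eqref{gamma(n-k)-Brnn-Eq}, I would start from the closed form~\eqref{gamma(n-k)-Eq} in Theorem~\ref{gamma(n-k)-thm}, factor out the block
\[
(-1)^{r+1}(r+1)(r+2)(r+j)!\sum_{q=0}^{r+1}\frac{(-1)^q}{q+1}\frac{S(r+q+1,q)}{(r-q+1)!(r+q+1)!}
\]
and apply the displayed identity above to collapse it to $(-1)^{r+1}(r+j)!\,B_{r+1}/[r!(r+1)!]$. Pulling out $j!$ via $(r+j)!/r!=j!\binom{r+j}{j}$ and dividing by $j!\,k!$ produces~\eqref{gamma(n-k)-Brnn-Eq} verbatim. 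The third identity~\eqref{coef-Bernoul-frak(g(t))} follows by the identical substitution in the formula~\eqref{coeficient-frak(g(t))} from Theorem~\ref{frak-g(v)-Maclayrin-thm}; the factor $S(k-r,j+1)/(k-r)!$ is simply carried along.

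There is no substantive obstacle here, only bookkeeping: the Bernoulli-to-Stirling bridge has already been verified in the second proof of Theorem~\ref{Hessenberg-comput-thm}, and the binomial rewriting $(r+j)!/(r!\,j!)=\binom{r+j}{j}$ is elementary. The only place one must be a little careful is the parity argument for~\eqref{Maclaurin-f(n)(t)-Bernoulli}, where the $i=0$ contribution coming from $B_1$ has to be separated by hand before invoking vanishing of odd-indexed Bernoulli numbers, after which the reindexation $i=2r-1$ produces the stated sum over $r\ge1$.
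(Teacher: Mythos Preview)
Your proposal is correct and follows essentially the same route as the paper: the paper's proof is a one-liner stating that the three formulas arise by combining the Bernoulli--Stirling identity~\eqref{Berlli-Stng-forla} with~\eqref{Maclaurin-f(n)(t)}, \eqref{gamma(n-k)-Eq}, and~\eqref{coeficient-frak(g(t))} respectively, together with $B_{2k+1}=0$ for $k\in\mathbb{N}$. Your invocation of Theorem~\ref{G(n)(0)(k)-Berni-cor} for the first identity is an equivalent shortcut (that theorem is itself obtained by the same substitution), and your explicit bookkeeping for the $i=0$ term and the parity reindexation $i=2r-1$ merely spells out what the paper leaves implicit.
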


\begin{proof}
These follow from combination of~\eqref{Berlli-Stng-forla} with~\eqref{Maclaurin-f(n)(t)} in Theorem~\ref{exp-t-n-power-lem}, \eqref{gamma(n-k)-Eq} in Theorem~\ref{gamma(n-k)-thm}, and~\eqref{coeficient-frak(g(t))} in Theorem~\ref{frak-g(v)-Maclayrin-thm} respectively and from consideration of $B_{2k+1}=0$ for $k\in\mathbb{N}$. The proof of Theorem~\ref{Maclaurin-f(n)(t)-Bern-thm} is complete.
\end{proof}

\section{Logarithmic convexity of the sequence $\bigl|f_j^{(k)}(0)\bigr|$}
In this section, making use of main results in~\cite{RCSM-D-21-00302.tex}, we derive logarithmic convexity of the sequence $\bigl|f_j^{(k)}(0)\bigr|$ and another one.

\begin{thm}\label{f(j)(0)-deriv-thm}
For $j,k\in\mathbb{N}$, we have
\begin{equation*}
f_j(0)=\frac{j!}{2}, \quad f_j^{(2k-1)}(0)=(2k+j-1)! \frac{B_{2k}}{(2k)!}, \quad f_j^{(2k)}(0)=0.
\end{equation*}
\par
For fixed $j\in\mathbb{N}$, the sequence
\begin{equation}\label{f(j)(0)-seq}
\bigl|f_j'(0)\bigr|, \bigl|f_j^{(3)}(0)\bigr|,\dotsc, \bigl|f_j^{(2k-3)}(0)\bigr|, \bigl|f_j^{(2k-1)}(0)\bigr|, \bigl|f_j^{(2k+1)}(0)\bigr|,\dotsc
\end{equation}
is logarithmically convex with respect to $k\in\mathbb{N}$.
\par
For fixed $j\ge6$, the extended sequence
\begin{equation}\label{extended-f(j)(0)-seq}
|f_j(0)|, \bigl|f_j'(0)\bigr|, \bigl|f_j^{(3)}(0)\bigr|,\dotsc, \bigl|f_j^{(2k-3)}(0)\bigr|, \bigl|f_j^{(2k-1)}(0)\bigr|, \bigl|f_j^{(2k+1)}(0)\bigr|,\dotsc
\end{equation}
is logarithmically convex with respect to $k\in\mathbb{N}$.
\end{thm}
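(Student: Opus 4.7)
The plan is to reduce everything to Theorem~\ref{G(n)(0)(k)-Berni-cor}, since $f_j^{(i)}(v)=G_j^{(j+i)}(v)$. Setting $K=j+i$ in the identity $G_j^{(K)}(0)=K!\,(-1)^{K-j+1}B_{K-j+1}/(K-j+1)!$ and specializing to $i=0$, $i=2k-1$, and $i=2k$ in turn yields $f_j(0)=-j!\,B_1=j!/2$, $f_j^{(2k-1)}(0)=(2k+j-1)!\,B_{2k}/(2k)!$, and $f_j^{(2k)}(0)=-(2k+j)!\,B_{2k+1}/(2k+1)!$. The last value vanishes for $k\in\mathbb{N}$ because $B_{2k+1}=0$ by~\eqref{Bernoullu=No-dfn-Eq}. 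This settles the three closed-form evaluations in the first part of the statement.

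For the log-convexity of the sequence~\eqref{f(j)(0)-seq}, write $a_k=\bigl|f_j^{(2k-1)}(0)\bigr|=(2k+j-1)!\,|B_{2k}|/(2k)!$ and factor the target three-term inequality $a_k^2\le a_{k-1}a_{k+1}$ as
\begin{equation*}
\frac{(2k+j-1)(2k+j-2)}{(2k+j)(2k+j+1)}\cdot\frac{[|B_{2k}|/(2k)!]^2}{[|B_{2k-2}|/(2k-2)!]\,[|B_{2k+2}|/(2k+2)!]}\le 1.
\end{equation*}
The first factor is strictly less than $1$ for every $k\ge 1$, while the second factor is $\le 1$ by the known log-convexity of the sequence $|B_{2k}|/(2k)!$ in $k$, which is exactly the main result of~\cite{RCSM-D-21-00302.tex} being invoked. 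Multiplying the two bounds gives the claim. The heart of this step is singling out the correct Bernoulli-ratio inequality from the reference; the factorial prefactor contributes only favourably.

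To extend log-convexity to the sequence~\eqref{extended-f(j)(0)-seq}, only one new three-term inequality needs to be checked, the ``junction'' condition
\begin{equation*}
|f_j'(0)|^2\le|f_j(0)|\cdot\bigl|f_j^{(3)}(0)\bigr|,
\end{equation*}
since the remaining conditions coincide with those already handled. Substituting the explicit values $|f_j(0)|=j!/2$, $|f_j'(0)|=(j+1)!/12$, and $|f_j^{(3)}(0)|=(j+3)!/720$ from the first part and cancelling $(j!)^2$ reduces the condition to the elementary polynomial inequality $10(j+1)\le(j+2)(j+3)$, that is, $j^2-5j-4\ge 0$. Since the positive root of $x^2-5x-4$ lies between $5$ and $6$, the inequality fails for $j\in\{1,2,3,4,5\}$ and holds for every integer $j\ge 6$, matching the hypothesis on the nose. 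The main obstacle will be locating and stating the exact Bernoulli log-convexity result from~\cite{RCSM-D-21-00302.tex}; the opening evaluation and the closing polynomial check are essentially bookkeeping.
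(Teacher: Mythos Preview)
Your argument is correct, and the bookkeeping in the first and third parts matches the paper almost verbatim (the paper reads off the derivative values from the Maclaurin expansion~\eqref{Maclaurin-f(n)(t)-Bernoulli} rather than from Theorem~\ref{G(n)(0)(k)-Berni-cor}, but these are equivalent). The genuine difference is in the middle step. The paper does not decompose $a_k^2\le a_{k-1}a_{k+1}$ into a factorial ratio times a Bernoulli ratio; instead it quotes from~\cite{RCSM-D-21-00302.tex} the statement that, for each fixed $\ell\ge 0$, the sequence $(2k+\ell)!\,|B_{2k}|/(2k)!$ is logarithmically convex in $k$, and then simply takes $\ell=j-1$ to get the desired conclusion in one line. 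Your route instead needs the logarithmic convexity of $|B_{2k}|/(2k)!$ itself; this is a true fact (equivalent to the log-convexity of $\zeta(2k)$, hence provable by H\"older), but it is \emph{not} the form the paper attributes to~\cite{RCSM-D-21-00302.tex}, and it is not implied by the $\ell$-parametrized version for any single $\ell\ge 0$. So your decomposition buys nothing here: the paper's direct citation with $\ell=j-1$ already absorbs both of your factors at once, whereas your version trades that single citation for a separate (and slightly stronger) Bernoulli inequality that you would have to justify independently.
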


\begin{proof}
The power series expansion~\eqref{Maclaurin-f(n)(t)-Bernoulli} can be rewritten as
\begin{equation*}
\frac{f_j(v)}{j!}=\frac{1}{2}+\frac{1}{j!}\sum_{k=1}^{\infty} \biggl[(2k+j-1)! \frac{B_{2k}}{(2k)!}\biggr] \frac{v^{2k-1}}{(2k-1)!}.
\end{equation*}
This means the second equality~\eqref{G(n)=0-4values} and
\begin{equation*}
f_j^{(2k-1)}(0)=(2k+j-1)! \frac{B_{2k}}{(2k)!}, \quad f_j^{(2k)}(0)=0
\end{equation*}
for $k\in\mathbb{N}$.
\par
From~\cite[Theorems~1.1 and~1.2]{RCSM-D-21-00302.tex}, we conclude that, for fixed $\ell\in\{0\}\cup\mathbb{N}$, the sequence
\begin{equation}\label{Bernou-frac-seq-log-conv}
(2k+\ell)!\frac{|B_{2k}|}{(2k)!}
\end{equation}
is logarithmically convex in $k\in\mathbb{N}$. Setting $\ell=j-1$ in~\eqref{Bernou-frac-seq-log-conv} reveals that, for fixed $j\in\mathbb{N}$, the sequence $\bigl|f_j^{(2k-1)}(0)\bigr|$, that is, the sequence in~\eqref{f(j)(0)-seq}, is logarithmically convex with respect to $k\in\mathbb{N}$.
\par
It is easy to verify that the inequality
\begin{equation*}
\bigl|f_j'(0)\bigr|^2=\biggl|(j+1)! \frac{B_{2}}{2!}\biggr|^2=\frac{[(j+1)!]^2}{12^2}
\le |f_j(0)| \bigl|f_j^{(3)}(0)\bigr|
=\frac{j!}{2} (j+3)! \frac{|B_4|}{4!}
=\frac{j!(j+3)!}{1440},
\end{equation*}
which is equivalent to $j^2-5j-4\ge0$, is valid for $j\ge6$. Consequently, the extended sequence in~\eqref{extended-f(j)(0)-seq} is logarithmically convex with respect to $k\in\mathbb{N}$.
The proof of Theorem~\ref{f(j)(0)-deriv-thm} is complete.
\end{proof}

\section{Remarks}

In this section, we give several remarks on our main results in this paper.

\begin{rem}
The first two values in~\eqref{G(n)=0-4values} confirm the examination in~\eqref{f(n)=0-1-der-values}.
\par
When taking $j=1,2,\dotsc,9$ and $k=0,1,2,\dotsc,9$ in the formula~\eqref{G(n-k-0)-value-eq2} or~\eqref{G(n)(0)(k)-Berni-cor-Eq}, we obtain the first few values of $G_j^{(k)}(0)$, which are listed in Table~\ref{G(j)(k)(0)-90}.
\begin{table}[hbtp]
\centering
\caption{The values of $G_j^{(k)}(0)$ for $1\le j\le 9$ and $0\le k\le9$} \label{G(j)(k)(0)-90}
\begin{tabular}{|c|c|c|c|c|c|c|c|c|c|c|}
\hline
 & $k=0$ & $k=1$ & $k=2$ & $k=3$ & $k=4$ & $k=5$ & $k=6$ & $k=7$ & $k=8$ & $k=9$ \\ \hline
$j=1$ & $1$ & $\boldsymbol{\frac{1}{2}}$ & $\frac{1}{6}$ & $0$ & $-\frac{1}{30}$ & $0$ & $\frac{1}{42}$ & $0$ & $-\frac{1}{30}$ & $0$ \\ \hline
$j=2$ & $0$ & $1$ & $\boldsymbol{1}$ & $\frac{1}{2}$ & $0$ & $-\frac{1}{6}$ & $0$ & $\frac{1}{6}$ & $0$ & $-\frac{3}{10}$ \\ \hline
$j=3$ & $0$ & $0$ & $2$ & $\boldsymbol{3}$ & $2$ & $0$ & $-1$ & $0$ & $\frac{4}{3}$ & $0$ \\ \hline
$j=4$ & $0$ & $0$ & $0$ & $6$ & $\boldsymbol{12}$ & $10$ & $0$ & $-7$ & $0$ & $12$ \\ \hline
$j=5$ & $0$ & $0$ & $0$ & $0$ & $24$ & $\boldsymbol{60}$ & $60$ & $0$ & $-56$ & $0$ \\ \hline
$j=6$ & $0$ & $0$ & $0$ & $0$ & $0$ & $120$ & $\boldsymbol{360}$ & $420$ & $0$ & $-504$ \\ \hline
$j=7$ & $0$ & $0$ & $0$ & $0$ & $0$ & $0$ & $720$ & $\boldsymbol{2520}$ & $3360$ & $0$ \\ \hline
$j=8$ & $0$ & $0$ & $0$ & $0$ & $0$ & $0$ & $0$ & $5040$ & $\boldsymbol{20160}$ & $30240$ \\ \hline
$j=9$ & $0$ & $0$ & $0$ & $0$ & $0$ & $0$ & $0$ & $0$ & $40320$ & $\boldsymbol{181440}$ \\ \hline
\end{tabular}
\end{table}
\par
Those numbers with bold fonts in Table~\ref{G(j)(k)(0)-90} are values of $f_j(0)$ for $1\le j\le9$.
\end{rem}

\begin{rem}
From~\eqref{f(n-k)-identity}, letting $v\to0$, it is easy to see that $G_j^{(k)}(0)=0$ for $0\le k<j-1$.
\par
When $k=j-1$, taking $v\to0$ on both sides of~\eqref{f(n-k)-identity} leads to
\begin{align*}
G_j^{(j-1)}(0)&=(-1)^{j-1}
\begin{vmatrix}
0 & F_1(0) & 0 & \dotsm & 0& 0\\
0 & F_1'(0) & F_1(0) & \dotsm & 0& 0\\
0 & F_1''(0) & \binom{2}1F_1'(0) & \dotsm & 0& 0\\
\vdots & \vdots & \vdots & \ddots & \vdots & \vdots\\
0 & F_1^{(j-3)}(0) & \binom{j-3}1F_1^{(j-4)}(0) &  \dotsm & F_1(0) & 0\\
0 & F_1^{(j-2)}(0) & \binom{j-2}1F_1^{(j-3)}(0) &  \dotsm & \binom{j-2}{j-3}F_1'(0) & F_1(0)\\
(j-1)! & F_1^{(j-1)}(0) & \binom{j-1}1F_1^{(j-2)}(0) & \dotsm & \binom{j-1}{j-3}F_1''(0) & \binom{j-1}{j-2}F_1'(0)
\end{vmatrix}\\
&=(j-1)!
\begin{vmatrix}
F_1(0) & 0 & \dotsm & 0& 0\\
F_1'(0) & F_1(0) & \dotsm & 0& 0\\
F_1''(0) & \binom{2}1F_1'(0) & \dotsm & 0& 0\\
\vdots & \vdots & \ddots & \vdots & \vdots\\
F_1^{(j-3)}(0) & \binom{j-3}1F_1^{(j-4)}(0) &  \dotsm & F_1(0) & 0\\
F_1^{(j-2)}(0) & \binom{j-2}1F_1^{(j-3)}(0) &  \dotsm & \binom{j-2}{j-3}F_1'(0) & F_1(0)
\end{vmatrix}\\
&=(j-1)!F_1^{j-1}(0)\\
&=(j-1)!.
\end{align*}
This recovers the first equality in~\eqref{G(n)=0-4values}.
\par
Similarly, we can recover two equalities in~\eqref{f(n)=0-1-der-values} and the left three equalities in~\eqref{G(n)=0-4values} and~\eqref{G(n)=0-4values-later2}.
\end{rem}

\begin{rem}
When taking $j=1,2,\dotsc,9$ and $k=0,1,2,\dotsc,7$ in the formula~\eqref{gamma(n-k)-Eq}, we obtain the first few values of $\gamma(j,k)$, which are listed in Table~\ref{gamma(j-k)-90}.
\begin{table}[hbtp]
\centering
\caption{The values of $\gamma(j,k)$ for $1\le j\le 9$ and $0\le k\le 7$} \label{gamma(j-k)-90}
\begin{tabular}{|c|c|c|c|c|c|c|c|c|}
\hline
  & $k=0$ & $k=1$ & $k=2$ & $k=3$ & $k=4$ & $k=5$ & $k=6$ & $k=7$ \\ \hline
$j=1$ & $\frac{1}{2}$ & $\frac{1}{6}$ & $\frac{1}{24}$ & $\frac{1}{120}$ & $\frac{1}{720}$ & $\frac{1}{5040}$ & $\frac{1}{40320}$ & $\frac{1}{362880}$ \\ \hline
$j=2$ & $1$ & $1$ & $\frac{1}{2}$ & $\frac{13}{72}$ & $\frac{19}{360}$ & $\frac{19}{1440}$ & $\frac{11}{3780}$ & $\frac{349}{604800}$ \\ \hline
$j=3$ & $3$ & $5$ & $4$ & $\frac{13}{6}$ & $\frac{73}{80}$ & $\frac{77}{240}$ & $\frac{11}{112}$ & $\frac{271}{10080}$ \\ \hline
$j=4$ & $12$ & $28$ & $31$ & $\frac{68}{3}$ & $\frac{151}{12}$ & $\frac{103}{18}$ & $\frac{563}{252}$ & $\frac{7789}{10080}$ \\ \hline
$j=5$ & $60$ & $180$ & $255$ & $\frac{707}{3}$ & $\frac{1957}{12}$ & $\frac{365}{4}$ & $\frac{2731}{63}$ & $ \frac{22783}{1260}$ \\ \hline
$j=6$ & $360$ & $1320$ & $2280$ & $2551$ & $\frac{4237}{2}$ & $\frac{5639}{4}$ & $\frac{3159}{4}$ & $\frac{485809}{1260}$ \\ \hline
$j=7$ & $2520$ & $10920$ & $22260$ & $29260$ & $28378$ & $\frac{65734}{3}$ & $\frac{42451}{3}$ & $\frac{712363}{90}$ \\ \hline
$j=8$ & $20160$ & $100800$ & $236880$ & $357840$ & $397152$ & $349224$ & $255543$ & $161154$ \\ \hline
$j=9$ & $181440$ & $1028160$ & $2736720$ & $4672080$ & $5841360$ & $5764752$ & $4715343$ & $3310319$ \\ \hline
\end{tabular}
\end{table}
Can one numerically find a negative value among the coefficients $\gamma(j,k)$ for $j>9$ or $k>7$?
\end{rem}

\begin{rem}
When taking $j=1,2,\dotsc,8$ and $k=0,1,2,\dotsc,9$ in the formula~\eqref{coeficient-frak(g(t))}, we obtain the first few values of $C(j,k)$, which are listed in Table~\ref{C(j-k)-90}.
\begin{table}[hbtp]
\centering
\caption{The values of $C(j,k)$ for $1\le j\le 8$ and $0\le k\le9$} \label{C(j-k)-90}
\begin{tabular}{|c|c|c|c|c|c|c|c|c|c|c|c|}
\hline
 & $k=0$ & $k=1$ & $k=2$ & $k=3$ & $k=4$ & $k=5$ & $k=6$ & $k=7$ & $k=8$ & $k=9$ \\ \hline
$j=1$ & $0$ & $0$ & $\frac{1}{2}$ & $2$ & $\frac{11}{2}$ & $13$ & $\frac{57}{2}$ & $60$ & $\frac{247}{2}$ & $251$ \\ \hline
$j=2$ & $0$ & $0$ & $0$ & $1$ & $8$ & $40$ & $\frac{485}{3}$ & $581$ & $1946$ & $6238$ \\ \hline
$j=3$ & $0$ & $0$ & $0$ & $0$ & $3$ & $40$ & $315$ & $1925$ & $10143$ & $48636$ \\ \hline
$j=4$ & $0$ & $0$ & $0$ & $0$ & $0$ & $12$ & $240$ & $2730$ & $23408$ & $169092$ \\ \hline
$j=5$ & $0$ & $0$ & $0$ & $0$ & $0$ & $0$ & $60$ & $1680$ & $26040$ & $297696$ \\ \hline
$j=6$ & $0$ & $0$ & $0$ & $0$ & $0$ & $0$ & $0$ & $360$ & $13440$ & $272160$ \\ \hline
$j=7$ & $0$ & $0$ & $0$ & $0$ & $0$ & $0$ & $0$ & $0$ & $2520$ & $120960$ \\ \hline
$j=8$ & $0$ & $0$ & $0$ & $0$ & $0$ & $0$ & $0$ & $0$ & $0$ & $20160$ \\ \hline
\end{tabular}
\end{table}
Can one numerically find a negative value among the coefficients $C(j,k)$ for $j>8$ or $k>9$?
\end{rem}

\begin{rem}
The formula~\eqref{Hessenberg-comput-thm-EQ} in Theorem~\ref{Hessenberg-comput-thm} is equivalent to
\begin{equation*}
B_j=j!
\begin{vmatrix}
1 &0&0&\dotsm&0&0&1 \\
\frac1{2!}&1&0&\dotsm&0&0&0 \\
\frac1{3!}&\frac1{2!}&1&\dotsm&0&0&0 \\
\vdots&\vdots&\vdots&\ddots&\vdots&\vdots&\vdots \\
\frac1{(j-1)!}&\frac1{(j-2)!}&\frac1{(j-3)!}&\dotsm&1&0&0 \\
\frac1{j!}&\frac1{(j-1)!}&\frac1{(j-2)!}&\dotsm&\frac1{2!}&1&0 \\
\frac1{(j+1)!}&\frac1{j!}&\frac1{(j-1)!}&\dotsm&\frac1{3!}&\frac1{2!}&0
\end{vmatrix}, \quad j\ge0
\end{equation*}
which was listed in~\cite[Section~21.5]{Korn2-Russian} and~\cite[p.~1]{malenfant2011finite}, can be deduced from a result in~\cite{Booth-Nguyen-2008-09}, and was collected in~\cite{mathematics-131192.tex}. For more information on determinantal expressions of the Bernoulli numbers $B_j$ and the Bernoulli polynomials, please refer to~\cite{2Closed-Bern-Polyn2.tex, mathematics-131192.tex} and closely-related references therein.
\end{rem}

\begin{rem}
Motivated by the formula~\eqref{Hessenberg-comput-thm-EQ} in Theorem~\ref{Hessenberg-comput-thm}, we mention~\cite[Theorem~3.3]{Slovaca-4738.tex} which reads that, for $\theta\in\mathbb{C}$ and $j\in\mathbb{N}$,
\begin{equation*}
\begin{vmatrix}
\binom{\theta}1 & \binom{\theta}0 & 0 & \dotsm &0 & 0 & 0\\
\binom{\theta}2 & \binom{\theta}1 & \binom{\theta}0 & \dotsm & 0 & 0 & 0\\
\binom{\theta}{3} & \binom{\theta}2 & \binom{\theta}1 & \dotsm & 0 & 0 & 0\\
\vdots & \vdots & \vdots & \ddots & \vdots& \vdots & \vdots\\
\binom{\theta}{j-2} & \binom{\theta}{j-3} & \binom{\theta}{j-4} & \dotsm & \binom{\theta}1 & \binom{\theta}0 & 0\\
\binom{\theta}{j-1} & \binom{\theta}{j-2} & \binom{\theta}{j-3} & \dotsm & \binom{\theta}2 & \binom{\theta}1 & \binom{\theta}0\\
\binom{\theta}{j} & \binom{\theta}{j-1} & \binom{\theta}{j-2} & \dotsm & \binom{\theta}{3} & \binom{\theta}2 & \binom{\theta}1
\end{vmatrix}
=\frac{(\theta)_j}{j!},
\end{equation*}
where the quantity
\begin{equation*}
(\theta)_j=\prod_{\ell=0}^{j-1}(\theta+\ell)
=
\begin{cases}
\theta(\theta+1)\dotsm(\theta+j-1), & j\ge1\\
1, & j=0
\end{cases}
\end{equation*}
is called the rising factorial of $\theta\in\mathbb{C}$.
\end{rem}

\begin{rem}
In 26 June 2021, Omran Kouba (Higher Institute for Applied Sciences and Technology, Syria) told the author, Feng Qi, via the ResearchGate that the formula~\eqref{f(v)g(v)=1=determ} can also be found in~\cite[p.~347]{Inselberg-JMAA-1978}. Hereafter, we found that the formula~\eqref{f(v)g(v)=1=determ} also appeared in~\cite[p.~17, Theorem~1.3]{Henrici-B-1974} and~\cite[Section~2]{Rutishauser-ZAMP-1956} and was called the Wronski formula dated back to the French paper~\cite{Wronski-B-1811} in the year 1811.
\par
In~\cite{Whittaker-Edinburgh-1918}, applying the Wronski formula~\eqref{f(v)g(v)=1=determ} from~\cite{Wronski-B-1811}, Whittaker proved that the root of the equation
\begin{equation*}
0=\lambda_0+\lambda_1v+\lambda_2v^2+\lambda_3v^3+\lambda_4v^4+\dotsm,
\end{equation*}
which is the smallest in absolute value, is given by the series
\begin{equation*}
-\frac{\lambda_0}{\lambda_1}-\frac{\lambda_0^2\lambda_2}{\lambda_1\begin{vmatrix}\lambda_1&\lambda_2\\ \lambda_0&\lambda_1\end{vmatrix}}
-\frac{\lambda_0^3\begin{vmatrix}\lambda_2&\lambda_3\\ \lambda_1&\lambda_2\end{vmatrix}}{\begin{vmatrix}\lambda_1&\lambda_2\\ \lambda_0&\lambda_1\end{vmatrix} \begin{vmatrix}\lambda_1&\lambda_2&\lambda_3\\ \lambda_0&\lambda_1&\lambda_2\\ 0&\lambda_0&\lambda_1\end{vmatrix}}
-\frac{\lambda_0^4\begin{vmatrix}\lambda_2&\lambda_3&\lambda_4\\ \lambda_1&\lambda_2&\lambda_3\\ \lambda_0&\lambda_1&\lambda_2\end{vmatrix}} {\begin{vmatrix}\lambda_1&\lambda_2&\lambda_3\\ \lambda_0&\lambda_1&\lambda_2\\ 0&\lambda_0&\lambda_1\end{vmatrix} \begin{vmatrix}\lambda_1&\lambda_2&\lambda_3 &\lambda_4\\ \lambda_0&\lambda_1&\lambda_2&\lambda_3\\ 0&\lambda_0&\lambda_1&\lambda_2\\ 0&0&\lambda_0&\lambda_1\end{vmatrix}}
-\dotsm.
\end{equation*}
\par
Let $P(v)=\lambda_0+\lambda_1v+\lambda_2v^2+\dotsm$ be a formal power series over th field of real numbers such that $\lambda_n>0$ and $\lambda_{n+2}\lambda_n-\lambda_{n+1}^2>0$ for $n=0,1,2,\dotsc$. In~\cite[p.~13, Problem~6]{Henrici-B-1974} and~\cite{Kaluza-MZ-1928}, it was obtained that, if $\frac{1}{P(v)}=\mu_0+\mu_1v+\mu_2v^2+\dotsm$, then $\mu_n<0$ for $n=1,2,\dotsc$. It seems that the Wronski formula~\eqref{f(v)g(v)=1=determ} from~\cite{Wronski-B-1811} was also applied in the paper~\cite{Dahlquist-1959}.
This result and the Wronski formula~\eqref{f(v)g(v)=1=determ} have been applied in~\cite{Posit-Det-Schroder.tex, CDM-73022.tex} to confirm the negativity or the positivity of several Hessenberg determinants whose elements involve the Bernoulli numbers $B_{2j}$ for $j\in\{0\}\cup\mathbb{N}$ and the large Schr\"oder numbers.
\end{rem}

\begin{rem}
In~\cite[Section~3, Theorem, (3.1)]{Alzer-Bernoulli-2000}, it is stated that
\begin{equation}\label{Bernoulli-ineq}
\frac{2(2j)!}{(2\pi)^{2j}} \frac{1}{1-2^{\alpha -2j}} \le(-1)^{j+1} B_{2j} \le \frac{2(2j)!}{(2\pi)^{2j}}\frac{1}{1-2^{\beta -2j}}
\end{equation}
for $j\in\mathbb{N}$, where $\alpha=0$ and $\beta=2+\frac{\ln(1-6/\pi^2)}{\ln2}=0.6491\dotsc$ are the best possible in the sense that they can not be replaced by any bigger and smaller constants in the double inequality~\eqref{Bernoulli-ineq} respectively.
See also~\cite[p.~805, 23.1.15]{abram} and~\cite{CAM-D-18-00067.tex, RCSM-D-21-00302.tex}.
By the double inequality~\eqref{Bernoulli-ineq}, power series~\eqref{Maclaurin-f(n)(t)-Bernoulli} in Theorem~\ref{Maclaurin-f(n)(t)-Bern-thm} can be reformulated as
\begin{equation*}
\frac{f_j(v)}{j!}>\frac{1}{2} +\frac{1}{\pi} \Biggl[\sum_{r=1}^{\infty} \frac{\binom{j+4r-3}{j}}{1-2^{\alpha-4r+2}} \biggl(\frac{v}{2\pi}\biggr)^{4r-3} -\sum_{r=1}^{\infty} \frac{\binom{j+4r-1}{j}}{1-2^{\beta-4r}} \biggl(\frac{v}{2\pi}\biggr)^{4r-1}\Biggr].
\end{equation*}
This may be used to discover a simple and hand-computable lower bound for $f_j(v)$ for $j\in\mathbb{N}$ on the finite interval $(0,2\ln2)$ or on the infinite interval $(0,\infty)$.
\end{rem}

\begin{rem}
Can one read out more useful information from main results, especially from power series~\eqref{Maclaurin-f(n)(t)-Bernoulli} and the quantities~\eqref{gamma(n-k)-Brnn-Eq} and~\eqref{coef-Bernoul-frak(g(t))}, in this paper to find properties of three functions $f_j(v)$, $g_j(v)$, and $\frak{g}_j(v)$?
\end{rem}

\begin{rem}
This paper is a companion of the articles~\cite{AADM-3164.tex, AIMS-Math20210491.tex, Taylor-arccos-v2.tex, ser-sinc-real-pow-Qi-Taylor.tex}.
\end{rem}

\section{Conclusions}
In this paper, by virtue of the Fa\`a di Bruno formula~\eqref{Bruno-Bell-Polynomial}, with the aid of three identities~\eqref{Bell(n-k)}, \eqref{Bell-stirling}, and~\eqref{B-S-frac-value} for partial Bell polynomials $B_{j,k}$, by means of the formula~\eqref{Sitnik-Bourbaki-reform} for higher order derivatives of the ratio of two differentiable functions, and with availability of other techniques, the authors established closed-form formulas in Theorems~\ref{First=Formula-f(j)-thm}, \ref{gamma(n-k)-thm}, and~\ref{Maclaurin-f(n)(t)-Bern-thm} in terms of the Bernoulli numbers $B_{2j}$ and the second kind Stirling numbers $S(j,k)$, presented determinantal expressions in Theorems~\ref{f(n-k)-id-Thm} and~\ref{Hessenberg-comput-thm}, derived recursive relations in Theorem~\ref{f(j)-Clark-recur-thm}, obtained power series expansions in Theorems~\ref{exp-t-n-power-lem} and~\ref{frak-g(v)-Maclayrin-thm}, and computed special values of the function $\frac{v^j}{1-\te^{-v}}$, its derivatives, and related ones used in Clark--Ismail's two conjectures. By these results, the authors also found a formula for a Hessenberg determinant and derived logarithmic convexity in Theorems~\ref{G(n)(0)(k)-Berni-cor} and~\ref{f(j)(0)-deriv-thm}.
\par
Some results reviewed or newly-obtained in this paper have been applied and will be potentially applied to other many fields, areas, subjects, directions, and disciplines. For example, the function defined in~\eqref{F-1(v)-Eq} and its reciprocal, the generating function of the classical Bernoulli numbers, have been applied in~\cite{Guo-Qi-JANT-Bernoulli.tex, Guo-Qi-Filomat-2011-May-12.tex, Alice-tri-half-conj.tex, TJI-5(1)(2021)-6.tex, TWMS-20657.tex, 2Closed-Bern-Polyn2.tex, Recipr-Sqrt-Geom-S.tex, Qi-Springer-2012-Srivastava.tex, AMSPROC.TEX, RCSM-D-21-00302.tex, Exp-Diff-Ratio-Wei-Guo.tex, CAM-D-13-01430-Xu-Cen}, for example.

\subsection*{Funding}
Dongkyu Lim was supported by the National Research Foundation of Korea under Grant No.~NRF-2021R1C1C1010902, Republic of Korea.

\subsection*{Acknowledgments}
The authors appreciate anonymous referees for their careful corrections to, valuable comments on, and helpful suggestions to the original version of this paper.

\end{document}